\theoremstyle{plain}
\newtheorem{thm}{Theorem}[section]
\newtheorem{prop}[thm]{Proposition}
\newtheorem{lem}[thm]{Lemma}
\newtheorem*{thm*}{Theorem}
\newtheorem*{prop*}{Proposition}
\newtheorem*{lem*}{Lemma}
\newtheorem*{cor*}{Corollary}
\newtheorem*{claim*}{Claim}
\newenvironment{numthm}[1]
{\customthm}
{\endcustomthm}
\newenvironment{numprop}[1]
{\customprop}
{\endcustomprop}
\theoremstyle{definition}
\newtheorem{defn}[thm]{Definition}
\theoremstyle{remark}
\newtheorem*{term*}{Terminology}
\newtheorem*{qn*}{Question}
\newcommand{\alignsub}{\vphantom{-}}%
\DeclareMathOperator{\Age}{Age}
\DeclareMathOperator{\Aut}{Aut}
\DeclareMathOperator{\cl}{cl}
\DeclareMathOperator{\dplus}{d_{_{+}}\!}
\DeclareMathOperator{\dminus}{d_{_{--}}\!}
\DeclareMathOperator{\Edg}{E}
\DeclareMathOperator{\Emb}{Emb}
\DeclareMathOperator{\Nbhd}{N}
\DeclareMathOperator{\Nminus}{N_{_{\!--}}}
\DeclareMathOperator{\Nplus}{N_{_{\!+}}}
\DeclareMathOperator{\Or}{Or}
\renewcommand{\d}[1]{\ensuremath{\operatorname{d}\!{#1}}}
\newcommand{\N}{
	\mathbb{N}
}
\newcommand{\R}{
	\mathbb{R}
}
\newcommand{\mc}[1]{
	\mathcal{#1}
}
\newcommand{\Cgzero}{
	\mathcal{C}_{>0}
}
\newcommand{\CF}{
	\mathcal{C}_F
}
\newcommand{\ov}[1]{
	\overline{#1}
}
\newcommand{\ex}{
	\exists\,
}
\newcommand{\sub}{
	\subseteq
}
\newcommand{\fin}{
	\subseteq_{\text{fin.\!}}
}
\newcommand{\Honza}{Hubi\v{c}ka }
\newcommand{\Jarik}{Ne\v{s}et\v{r}il }
\newcommand{\Jarikn}{Ne\v{s}et\v{r}il}
\newcommand{\Fr}{Fra\"{i}ss\'{e} }
\newcommand{\Frn}{Fra\"{i}ss\'{e}}
\newcommand{\KPT}{Kechris-Pestov-Todor\v{c}evi\'{c} }
\proof\endcsname{\normalparindent}{0pt }{}{}
\title{Sparse graphs and the fixed points on type spaces property}
\author{Rob Sullivan}
\address{Rob Sullivan, Institut f\"{u}r Mathematische Logik, Universit\"{a}t  M\"{u}nster,  
Einsteinstraße 62,
48149  M\"{u}nster,
Germany}
\email{r.sullivan@uni-muenster.de}
\thanks{This project formed the first part of the PhD of the author at Imperial College London, under the supervision of Prof.\ David Evans.}
\date{\today}
\subjclass[2020]{03C15, 37B05, 20B27, 05C55, 05D10}
\keywords{sparse graphs, Hrushovski constructions, omega-categorical, type spaces, orientations}
\begin{document}

\begin{abstract}
    We examine the topological dynamics of the automorphism groups of $\omega$-categorical sparse graphs resulting from Hrushovski constructions. Specifically, we consider the fixed points on type spaces property, which a structure $M$ has if, for all $n \in \N$, every $\Aut(M)$-subflow of the space $S_n(M)$ of $n$-types has a fixed point. Extending a result of Evans, \Honza and \Jarikn, we show that there exists an $\omega$-categorical structure $M$, resulting from a Hrushovski construction, such that no $\omega$-categorical expansion of $M$ has the fixed points on type spaces property.
\end{abstract}

\maketitle

\section{Introduction}
 
    The paper \cite{EHN19} is concerned with the topological dynamics of the automorphism groups of sparse graphs, in the context of the \KPT correspondence (\cite{KPT05}). One of the key results of \cite{EHN19} is the following:

    \begin{thm*}[{\cite[Theorem 1.2]{EHN19}}]
        There exists an $\omega$-categorical structure $M$ such that no $\omega$-categorical expansion has an extremely amenable automorphism group.
    \end{thm*}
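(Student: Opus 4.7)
My plan is to apply the \KPT correspondence: $\Aut(M')$ is extremely amenable iff $\Age(M')$ is a \Fr class of rigid structures with the Ramsey property, which in particular forces $M'$ to carry a $0$-definable linear order on its universe. So the goal is to build an $\omega$-categorical $M$ for which no $\omega$-categorical expansion can support such a rigid Ramsey age.

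I would construct $M$ via a Hrushovski amalgamation in the sparse-graphs framework: take a predimension $\delta(A) = |A| - \alpha\, e(A)$ (or $\delta(A) = |A| - f(e(A))$ for a suitable concave $f$), form the class $\CF$ of finite graphs $A$ with $\delta(A') \geq 0$ for all $A' \sub A$, and impose a bounding function $F$ on self-sufficient closures so that the resulting \Fr limit $M$ of $(\CF, \leq)$ is $\omega$-categorical. This yields a sparse graph equipped with a well-behaved notion of strong substructure and with free amalgamation of strong extensions.

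The core step is to pinpoint a minimal algebraic configuration inside $\CF$ whose intrinsic symmetry cannot be broken consistently with $\omega$-categoricity. I would locate a self-sufficient $A \in \CF$ together with a self-sufficient minimal extension $A \sq B$ whose ``copies over $A$'' carry a non-trivial group action and whose orbit under $\Aut(M/A)$ is infinite. In any $\omega$-categorical expansion $M'$ the type space over $A$ remains finite, so infinitely many copies of $B$ over $A$ must share a single $M'$-type. Amalgamating many copies of $B$ over $A$ within $\CF$ and applying the Ramsey property to the induced colouring of these copies forces a non-trivial automorphism on some finite structure in $\Age(M')$, contradicting rigidity and hence extreme amenability.

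The hardest step will be a delicate calibration of the Hrushovski data. The predimension and the bounding function $F$ must be chosen so that simultaneously (a) $M$ is $\omega$-categorical; (b) $\CF$ contains self-sufficient extensions with the required symmetric ``algebraic multiplicity''; and (c) this symmetry persists in \emph{any} $\omega$-categorical expansion, not merely expansions by unary predicates. Part (c) is the genuine obstacle: one must argue that the orbit structure on minimal algebraic extensions of $M$ is so intrinsically non-rigid that no $0$-definable enrichment of $M$ can separate these orbits while keeping the resulting structure $\omega$-categorical. This is where the fine combinatorial properties of Hrushovski sparse-graph constructions do the real work.
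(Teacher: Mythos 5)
Your framing (KPT correspondence, Hrushovski construction, the observation that the real difficulty is ruling out \emph{arbitrary} $\omega$-categorical expansions) is sound, but the core step of your argument has a genuine gap, and you are missing the key idea of the actual proof.

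The problematic step is the final paragraph of your core argument: ``Amalgamating many copies of $B$ over $A$ within $\CF$ and applying the Ramsey property to the induced colouring of these copies forces a non-trivial automorphism on some finite structure in $\Age(M')$.'' The Ramsey property hands you a monochromatic copy of a larger structure for a given colouring of small substructures; it does not produce an automorphism. Having infinitely many copies of $B$ over $A$ that share an $M'$-type is not the same as having a non-trivial automorphism of a single finite piece of $M'$: the expansion is free to impose asymmetric relations (e.g.\ a linear order) on and between those copies while keeping each finite substructure rigid, since sharing a type only says the copies are conjugate under $\Aut(M'/A)$, not that they are interchangeable inside some fixed finite substructure. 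You have not specified what is being coloured, and no choice I can see makes this inference go through. Moreover, the ``non-trivial group action on copies over $A$'' you want is exactly the symmetry that a clever expansion could, a priori, break; asserting it ``cannot be broken consistently with $\omega$-categoricity'' is precisely what needs proof, and the Ramsey step is not supplying it.

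The idea that actually makes the argument work, and which is absent from your proposal, is the flow of $2$-orientations. Because $M$ is $2$-sparse, the space $\Or(M)$ of $2$-orientations is a nonempty compact $\Aut(M)$-invariant subspace of $2^{M^2}$, hence an $\Aut(M)$-flow (Lemma \ref{OrMisflow}). If $M'$ is any expansion with $H = \Aut(M') \leq \Aut(M)$ extremely amenable, then $H$ fixes some $\rho \in \Or(M)$. The decisive combinatorial input is Proposition \ref{MFfixorinforbs} (from \cite{EHN19}): for a $2$-sparse graph in which every vertex has infinite degree, any subgroup fixing a $2$-orientation has infinitely many orbits on $M^2$. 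Thus $H$ is not oligomorphic and, by Ryll-Nardzewski, $M'$ is not $\omega$-categorical. This argument is in fact shorter and more elementary than what you propose: it uses neither the Ramsey property nor the rigidity/linear-order consequence of KPT, only the bare definition of extreme amenability applied to one explicit flow, together with a structural fact about orientation stabilisers. Your step (c), which you correctly flag as the genuine obstacle, is handled automatically because $\Or(M)$ is an $\Aut(M)$-flow regardless of the expansion. The Hrushovski calibration you describe (choosing the control function $F$ so that $M_F$ is $\omega$-categorical, and so that vertices have infinite degree) is needed and is in the right spirit; but you should redirect your effort from locating a ``minimal algebraic configuration with intrinsic symmetry'' to establishing the infinite-orbit property of orientation stabilisers, which is where the sparse-graph combinatorics actually does the work.
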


    We recall that, for a Hausdorff topological group $G$, a \emph{$G$-flow} is a continuous action of $G$ on a nonempty compact Hausdorff space $X$, and we say that $G$ is \emph{extremely amenable} if every $G$-flow has a $G$-fixed point.

    In this paper, we show that the above result holds even in the context of a more restricted class of flows: subflows of type spaces. Let $M$ be a relational structure. Following \cite{MS23}, we say that $M$ has the \emph{fixed points on type spaces property} (FPT), if, for each $n \in \N_+$, every subflow of $S_n(M)$ has an $\Aut(M)$-fixed point, where $S_n(M)$ denotes the Stone space of $n$-types with parameters from $M$ and the action is given by translation of parameters in formulae. This property is studied in depth in \cite{MS23}, and may be thought of as a restriction of extreme amenability to a subclass of flows which occur naturally in a model-theoretic context. 
    
    The main result of this paper is as follows.

    \begin{numthm}{\ref{FPTMF}}
        There is an $\omega$-categorical structure $M$ such that no $\omega$-categorical expansion has FPT, the fixed points on type spaces property.
    \end{numthm}

    The structure $M$ appearing in both these results is a particular type of $\omega$-categorical sparse graph known as an $\omega$-categorical Hrushovski construction (first introduced in \cite{Hru88} -- a clear introductory exposition may be found in \cite{Eva13}). A graph $A$ is \emph{$k$-sparse} if for all finite $B \sub A$, the number of edges of $B$ is at most $k$ times the number of vertices of $B$.

    The proof strategy for Theorem \ref{FPTMF} is as follows. A central fact in the analysis of sparse graphs is that a graph is $k$-sparse iff it is $k$-orientable: its edges may be directed so that each vertex has at most $k$ out-edges. This fact is well known to graph theorists (\cite{Nas64}), and the proof is by Hall's Marriage Theorem (see Prop.\ \ref{oriffsparse}).
    
    For any $k$-sparse graph $M$, the space $\Or(M)$ of $k$-orientations of $M$ (with the subspace topology from $2^{M^2}$) gives an $\Aut(M)$-flow (Lemma \ref{OrMisflow}). As in \cite{EHN19}, we specialise to the case $k = 2$ (results generalise straightforwardly to any $k$). Theorem 1.2 of \cite{EHN19}, the result of Evans, \Honza and \Jarik mentioned above, then immediately results from the following, using the Ryll-Nardzewski theorem:

    \begin{numprop}{\ref{MFfixorinforbs}}[adapted from {\cite[Theorem 3.7]{EHN19}}]
        Let $M$ be an infinite $2$-sparse graph in which all vertices have infinite degree. Let $G = \Aut(M)$.
		
        Consider the $G$-flow $G \curvearrowright \Or(M)$. If $H \leq G$ fixes a $2$-orientation of $M$, then $H$ has infinitely many orbits on $M^2$.
    \end{numprop}

    To prove Theorem \ref{FPTMF}, we also use the above result. Letting $M$ be the $\omega$-categorical Hrushovski construction detailed in Section \ref{omegacatsparsesec}, we define a notion of when a $1$-type \emph{encodes} an orientation of $M$. We then define an $\Aut(M)$-flow morphism $u : S_1(M) \to 2^{{M}^2}$ which sends each orientation-encoding $1$-type to the orientation it encodes. Let $M'$ be an expansion of $M$ with FPT, and let $H = \Aut(M')$. Then $H$ must fix a point in the subflow of orientation-encoding $1$-types, so fixes an orientation. We then use Proposition \ref{MFfixorinforbs} to see that $H$ has infinitely many orbits on $M^2$, so $H$ is not oligmorphic, and therefore by the Ryll-Nardzewski theorem we see that $M'$ is not $\omega$-categorical. Thus $M$ has no $\omega$-categorical expansion with FPT.

\subsection*{Acknowledgements} The author would like to thank David Evans for his supervision during this project, which formed the first half of the author's PhD thesis. The author would also like to thank the anonymous referee for their helpful comments, including a simplification of the proof of the main theorem.

\section{Background} \label{backgroundchap}

    In this section, we present the sufficient background material on topological dynamics, sparse graphs and \Fr classes with distinguished substructures (``strong \Fr classes") in order to be able to construct the $\omega$-categorical examples of sparse graphs ($\omega$-categorical Hrushovski constructions) given in Section \ref{omegacatsparsesec}.

    We assume that the reader is familiar with the classical \Fr theory, the pointwise convergence topology on automorphism groups of first-order structures and the Ryll-Nardzewski theorem. (The background for these three topics can be found in \cite[Chapter 7]{Hod93} and \cite[Sections 1-2]{Eva13}.)
	
    The background material in this section has been mostly adapted from \cite{EHN19} and \cite{Eva13}.
    
    All first-order languages considered in this article will be countable and relational.

\subsection{Topological dynamics} \label{top dyn intro}

    A central object of study in topological dynamics is the following (see \cite{Aus88} for a thorough background):
	
    \begin{defn}
        A \emph{$G$-flow} is a continuous action $G \curvearrowright X$ of a Hausdorff topological group $G$ on a nonempty compact Hausdorff topological space $X$.
    \end{defn}
	
    We will often simply write $X$ to refer to the $G$-flow $G \curvearrowright X$ when this is clear from context. Given a $G$-flow on $X$, $\ov{G \cdot x}$, the orbit closure of a point $x \in X$, is a $G$-invariant compact subset of $X$. In general, a nonempty compact $G$-invariant subset $Y \subseteq X$ defines a \emph{subflow} by restricting the $G$-action to $Y$.
	
    Let $X, Y$ be $G$-flows. A \emph{$G$-flow morphism} $X \to Y$ is a continuous map $\alpha: X \to Y$ such that $\alpha(g \cdot x) = g \cdot \alpha(x)$ (this property is called \emph{$G$-equivariance}). A surjective $G$-flow morphism $X \to Y$ is called a \emph{factor} of $X$, and we will also say that $Y$ is a factor of $X$ when the morphism is contextually implied. Bijective $G$-flow morphisms are isomorphisms, as they are between compact Hausdorff spaces.

\subsection{Graphs}

    We work with graphs in first-order logic as follows. Let $\mc{L}$ be a first-order language consisting of a single binary relation symbol $E$. A \emph{graph} consists of an $\mc{L}$-structure $(A, E^A)$ where the binary relation $E^A \sub A^2$ is symmetric and irreflexive. We call $A$ the \emph{vertex set}, and write $E_A$ for the set of unordered pairs $\{a, b\}$ such that $(a, b) \in E^A$. We call $E_A$ the \emph{edge set}, and this will usually be the relevant set we work with in this paper, following the usual graph-theoretic definition of a graph -- rather than the symmetric set $E^A$ of ordered pairs, which we only introduce for the sake of first-order structure formalism. We will usually just write $A$ to denote the graph $(A, E^A)$ when this is clear from context. We will often write $\sim$ instead of $E$ in formulae to indicate adjacency. 
    
    By the above definition, here we only work with \emph{simple} graphs: graphs having no loops on a single vertex or multiple edges between two vertices.
	
    \begin{defn}
        Let $(A, E^A)$ be a graph. A set $\rho^A \sub A^2$ is an \emph{orientation} of $(A, E^A)$ if:
        \begin{itemize}
            \item $\rho^A \sub E^A$;
            \item for each $(x, y) \in E^A$, exactly one of $(x, y), (y, x)$ is in $\rho^A$.
        \end{itemize}
		
        We may visualise the above definition as follows: an orientation of a graph consists of a direction for each edge.
        
        Note that the above definition implies that $\rho^A$ contains no directed loops or directed $2$-cycles. We will refer to $(A, E^A, \rho^A)$ as an \emph{oriented graph}.
    \end{defn}
    \begin{defn} \label{basicordefs}
        Let $(A, E^A, \rho^A)$ be an oriented graph.
		
        If $(x, y) \in \rho^A$, we refer to $(x, y)$ as an \emph{out-edge} of $x$ and as an \emph{in-edge} of $y$. We call $y$ an \emph{out-vertex} of $x$, and $x$ an \emph{in-vertex} of $y$.
		
        The \emph{out-neighbourhood} $\Nplus(x)$ of $x$ consists of the out-vertices of $x$. The \emph{in-neighbourhood} $\Nminus(x)$ of $x$ consists of the in-vertices of $x$. The \emph{out-degree} $\dplus(x)$ of $x$ is defined to be $\dplus(x) = |\Nplus(x)|$, and the \emph{in-degree} $\dminus(x)$ of $x$ is defined to be $\dminus(x) = |\Nminus(x)|$.		
    \end{defn}
  
    When we refer to a subgraph of a graph, or an oriented subgraph of an oriented graph, we mean a substructure in the model-theoretic sense. For graph theorists, these substructures would usually be referred to as \emph{induced} subgraphs.
	
    (We use the full notation for structures in this section for clarity, but henceforth we will usually denote graphs $(A, E^A)$ by $A$, and oriented graphs $(A, E^A, \rho^A)$ by $A$ or $(A, \rho^A)$.) 

\subsection{Sparse graphs}

    \begin{defn}
        Let $k \in \N_+$. A graph $A$ is \emph{$k$-sparse} if for all $B \fin A$, we have $|E_B| \leq k |B|$.
    \end{defn}
	
    \begin{defn}
        Let $(A, \rho^A)$ be an oriented graph. Let $k \in \N_+$. We call $\rho^A$ a \emph{$k$-orientation} if for $x \in A$, we have $\dplus(x) \leq k$. We refer to $(A, \rho^A)$ as a \emph{$k$-oriented graph}.
        
        If an undirected graph $A$ has a $k$-orientation, we say it is \emph{$k$-orientable}.
    \end{defn}
	
    The following proposition is well-known to graph theorists (\cite{Nas64}), and will be a key tool here. We present the proof as it is relatively brief.
	
    \begin{prop}[{\cite[Theorem 3.4]{EHN19}}] \label{oriffsparse}
        Let $A$ be a countable graph. Then $A$ is $k$-orientable iff it is $k$-sparse.
    \end{prop}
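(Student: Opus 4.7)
The plan is to prove the two directions of the equivalence separately.

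For the forward direction, suppose $A$ admits a $k$-orientation $\rho^A$ and let $B \fin A$. Each edge of $E_B$ has both endpoints in $B$, so when oriented it contributes exactly one out-edge of a unique vertex of $B$. Summing gives
\[ |E_B| = \sum_{v \in B} |\{u \in B : (v, u) \in \rho^A\}| \leq \sum_{v \in B} \dplus(v) \leq k|B|, \]
so $A$ is $k$-sparse.

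For the reverse direction, I would first handle the finite case using Hall's Marriage Theorem and then bootstrap to countable $A$ by compactness. Given a finite $k$-sparse graph $B$, build a bipartite auxiliary graph $\Gamma$ with parts $E_B$ and $B \times \{1, \ldots, k\}$ (so $k$ copies of each vertex), joining $e = \{u, v\} \in E_B$ to $(w, i)$ whenever $w \in \{u, v\}$. A $\Gamma$-matching saturating $E_B$ encodes a $k$-orientation: declaring $e$ to have tail $u$ whenever $e$ is matched to some $(u, i)$ makes each vertex the tail of at most $k$ edges. To verify Hall's condition, take $F \sub E_B$ and let $V(F)$ be the set of endpoints of edges in $F$; the $\Gamma$-neighbourhood $N_\Gamma(F)$ is exactly $V(F) \times \{1, \ldots, k\}$, so $|N_\Gamma(F)| = k|V(F)|$, and the $k$-sparsity of $B$ applied to $V(F)$ gives $|F| \leq |E_{V(F)}| \leq k|V(F)|$, as required.

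For countable $A$, I would use compactness of $2^{A^2}$ with the product topology. For each $B \fin A$, let
\[ U_B = \{\rho \in 2^{A^2} : \rho \cap B^2 \text{ is a $k$-orientation of } B\}. \]
Each $U_B$ is closed, since its defining conditions involve only finitely many coordinates, and nonempty by the finite case (extend any $k$-orientation of $B$ by zero outside $B^2$). Moreover $U_{B_1} \cap \cdots \cap U_{B_n} \supseteq U_{B_1 \cup \cdots \cup B_n}$, so the family has the finite intersection property; by compactness $\bigcap_{B \fin A} U_B$ is nonempty, and any $\rho$ in this intersection is a $k$-orientation of $A$ (if some $v$ had $\dplus(v) > k$ in $\rho$, a finite witness $B \ni v$ would contradict membership in $U_B$).

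The main obstacle is setting up the bipartite graph for Hall's theorem correctly: using $k$ copies of each vertex is exactly what turns the sparsity bound $|E_{V(F)}| \leq k|V(F)|$ into Hall's condition $|F| \leq |N_\Gamma(F)|$, while the passage from finite to countable is then routine compactness.
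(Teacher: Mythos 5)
Your proof is correct and follows essentially the same route as the paper: the forward direction by a direct counting argument, and the reverse direction by the identical Hall's Marriage Theorem argument on the bipartite graph with parts $E_B$ and $B \times \{1,\dots,k\}$. The only nominal difference is that you pass from finite to countable by compactness of $2^{A^2}$ whereas the paper invokes K\H{o}nig's lemma, but these are interchangeable formulations of the same step.
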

    \begin{proof}
        $\Rightarrow:$ straightforward. $\Leftarrow:$ We prove the statement for finite $A$, and then the statement for countably infinite $A$ follows by a straightforward K\H{o}nig's lemma argument. We wish to produce a $k$-orientation of $A$, and to do this we must direct each edge. We will use Hall's Marriage Theorem (\cite[III.3]{Bol98}), which for the convenience of the reader we briefly restate: for a finite bipartite graph $B$ with left set $X$ and right set $Y$, there is an $X$-saturated matching iff $|W| \leq |\Nbhd_B(W)|$ for $W \sub X$. (Here $\Nbhd_B(W)$ denotes the neighbourhood of $W$ in $B$.)
		
        Form a bipartite graph $B$ with left set $E_A$ and right set $A \times [k]$, and place an edge between $e \in E_A$ and $(x, i) \in A \times [k]$ if $x \in e$. Given a left-saturated matching, if $e$ is matched to $(x, i)$, we orient $e$ outwards from $x$, and this gives a $k$-orientation of $A$.
		
        To see that a left-saturated matching exists, take $W \sub E_A$. Let $V$ be the set of vertices of the edges which lie in $W$. Then $|\Nbhd_B(W)| = k|V|$, and as $A$ is $k$-sparse, we have that $k|V| \geq |\Edg_A(V)|$, where $\Edg_A(V)$ is the set of edges in $A$ whose vertices lie in $V$. As $|\Edg_A(V)| \geq |W|$, by Hall's Marriage Theorem there exists a left-saturated matching of the bipartite graph $B$.
	\end{proof}
	
    For presentational simplicity, we will work with $k = 2$. Our results generalise straightforwardly for $k > 2$.
	
    \textbf{Note:} in this paper, we may occasionally say ``oriented graph" to in fact mean ``$2$-oriented graph". We will try to avoid this in general, but when this does occur the meaning will be clear from context.

    \begin{defn}
        Let $M$ be a $2$-sparse graph. We let $\Or(M) \sub 2^{M^2}$ denote the topological space of $2$-orientations of $M$, where the topology is given by the subspace topology from the Cantor space $2^{M^2}$. 
    \end{defn}
    \begin{lem} \label{OrMisflow}
        Let $M$ be a $2$-sparse graph. Then $\Or(M)$ is an $\Aut(M)$-flow with the natural action \[g \cdot \rho = \{(gx, gy) : (x, y) \in \rho\}.\]
    \end{lem}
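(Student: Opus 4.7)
The plan is to verify the three defining ingredients of a flow in turn: that $\Or(M)$ is a nonempty compact Hausdorff space, and that the prescribed action is well-defined and continuous.

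First, I would note that $\Or(M)$ is nonempty. Since $M$ is $2$-sparse, Proposition \ref{oriffsparse} (which is stated for countable $M$, but we may restrict to the countable case or simply argue directly via Zorn or compactness in the general case) yields a $2$-orientation. For compactness and Hausdorffness, I would exhibit $\Or(M)$ as a closed subspace of the compact Hausdorff space $2^{M^2}$. The defining conditions of a $2$-orientation all cut out closed sets: for each pair $(x,y) \in M^2$ with $x \not\sim y$ (or $x=y$), the condition $(x,y) \notin \rho$ is clopen; for each edge $\{x,y\}$, the condition ``exactly one of $(x,y),(y,x)$ lies in $\rho$'' is clopen; and for each vertex $x \in M$, the out-degree bound $\dplus(x) \leq 2$ is closed, since its complement $\dplus(x) \geq 3$ is a union of basic open sets (indexed by triples of neighbours of $x$). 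Intersecting over all such conditions, which is an intersection of closed sets, gives $\Or(M)$ closed.

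Next, I would check that the action is well-defined: given $\rho \in \Or(M)$ and $g \in \Aut(M)$, the set $g \cdot \rho = \{(gx,gy) : (x,y) \in \rho\}$ is again a $2$-orientation, because $g$ preserves the edge relation, so $g \cdot \rho \subseteq E^M$ and the exclusive-or condition on each edge transfers; and $g$ induces a bijection $\Nplus_{g \cdot \rho}(gx) = g(\Nplus_\rho(x))$ between out-neighbourhoods, preserving the out-degree bound.

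Finally, for continuity of $\Aut(M) \times \Or(M) \to \Or(M)$, I would work with the standard subbasis of $2^{M^2}$ given by fixing the value of $\rho$ at a single pair $(a,b) \in M^2$. Given $(g,\rho)$ with $g \cdot \rho$ agreeing with a target value on $(a,b)$, this says precisely that $\rho$ takes the corresponding value at $(g^{-1}a, g^{-1}b)$. I would then take the basic neighbourhood of $g$ in $\Aut(M)$ consisting of those $h$ with $h^{-1}a = g^{-1}a$ and $h^{-1}b = g^{-1}b$ (a pointwise-convergence open set), together with the basic neighbourhood of $\rho$ determined by its value at $(g^{-1}a, g^{-1}b)$. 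Any $(h,\sigma)$ in this product neighbourhood then satisfies $(h \cdot \sigma)(a,b) = \sigma(h^{-1}a, h^{-1}b) = \sigma(g^{-1}a, g^{-1}b) = \rho(g^{-1}a, g^{-1}b) = (g \cdot \rho)(a,b)$, giving continuity.

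None of these steps presents a serious obstacle; the only point requiring mild care is the closedness of the out-degree condition, where one must rephrase the universally quantified bound $\dplus(x) \leq 2$ as the complement of an existential statement cut out by finitely many coordinates, so that the corresponding failure set is open.
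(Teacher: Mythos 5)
Your proof is correct and takes essentially the same route as the paper: nonemptiness via Proposition \ref{oriffsparse}, closedness of $\Or(M)$ inside $2^{M^2}$ because failure of the $2$-orientation conditions is witnessed on finitely many coordinates, and invariance/continuity of the translation action. The paper's version is simply terser, leaving the continuity of the $\Aut(M)$-action on $2^{M^2}$ implicit as a standard fact, whereas you spell it out.
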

    \begin{proof}
        By Prop.\ \ref{oriffsparse}, we see that $\Or(M)$ is non-empty, and it is immediate that $\Or(M)$ is $\Aut(M)$-invariant. It therefore remains to show that $\Or(M)$ is closed in $2^{M^2}$: if $\sigma \in 2^{M^2}$ is not a $2$-orientation, then this is witnessed on a finite set, so $2^{M^2} \setminus \Or(M)$ is open.
    \end{proof}

\subsection{Graph predimension}

    One way to characterise $2$-sparsity is in terms of a particular notion of \emph{graph predimension}.
	
    \begin{defn}
        Let $A$ be a finite graph. We define the \emph{predimension} $\delta(A)$ of $A$ to be $\delta(A) = 2|A| - |E_A|$.
		
        For $B \sub A$, we define the \emph{relative predimension of $A$ over $B$} to be $\delta(A/B) = \delta(A) - \delta(B)$.
    \end{defn}

    We immediately see that, for $A$ a finite graph, $A$ is $2$-sparse iff for all $B \sub A$ we have $\delta(B) \geq 0$.

\subsection{Strong classes} \label{strongclassessec}

    For the $\omega$-categorical Hrushovski constructions in Section \ref{omegacatsparsesec}, we will need to take a class of sparse graphs where we only consider particular distinguished embeddings between structures in the class, and for this we require the definition below. In the subsequent section, we will construct \Fr classes where we only permit these distinguished embeddings between finite structures.

    \begin{defn} \label{strongclassdef}
        Let $\mc{K}$ be a class of finite $\mc{L}$-structures closed under isomorphisms. Let $\mc{S} \sub \Emb(\mc{K})$ be a class of embeddings between structures in $\mc{K}$ satisfying the following:
        \begin{enumerate}
            \item[(S1)] $\mc{S}$ contains all isomorphisms;
            \item[(S2)] $\mc{S}$ is closed under composition;
            \item[(S3)] if $f : A \to C$ is in $\mc{S}$ and $f(A) \sub B \sub C$ with $B \in \mc{K}$, then $f : A \to B$ is in $\mc{S}$.
        \end{enumerate}
        Then we call $(\mc{K}, \mc{S})$ a \emph{strong class}, and call the elements of $\mc{S}$ \emph{strong embeddings}. 
		
        (This is originally due to Hrushovski - see \cite{Hru88}. An accessible exposition of strong classes is in \cite[Section 3]{Eva13}.)
		
        If $A, B \in \mc{K}$, $A \sub B$ and the inclusion map $\iota : A \hookrightarrow B$ is in $\mc{S}$, then we write $A \leq B$ and say $A$ is a \emph{strong substructure} of $B$. We then have that:
		
        \begin{enumerate}
            \item[(L1)] $\leq$ is reflexive;
            \item[(L2)] $\leq$ is transitive;
            \item[(L3)] if $A \leq C$ and $A \sub B \sub C$ with $B \in \mc{K}$, then $A \leq B$.
        \end{enumerate}
		
        We will often write $(\mc{K}, \leq)$ instead of $(\mc{K}, \mc{S})$, and we will refer to the elements of $\mc{S}$ as \emph{$\leq$-embeddings}.
    \end{defn}
	
    If $(\mc{K}, \leq)$ is a strong class (i.e.\ $\mc{S}$ satisfies (S1), (S2), (S3)), then we have that for $f : A \to B$ in $\mc{S}$, if $X \leq A$, then $f(X) \leq B$.

    \begin{defn} \label{infstrong}
        Suppose $(\mc{K}, \leq)$ is a strong class. Let $A_0 \leq A_1 \leq \cdots$ be an increasing $\leq$-chain of structures in $\mc{K}$, and let $M = \bigcup_{i \in \N} A_i$. Let $A \fin M$. 
        
        Then we write $A \leq M$, and say that $A$ is a \emph{strong substructure} of $M$, or that $A$ is \emph{$\leq$-closed} in $M$, to mean that there is some $A_i$ ($i \in \N$) with $A \leq A_i$.

        Given $A \in \mc{K}$ and an embedding $f : A \to M$, we will likewise say that $f$ is a $\leq$-embedding if $f(A) \leq M$.
    \end{defn}
    
    The above definition is independent of the choice of $\leq$-chain. To see this, suppose $M$ is also the union of the elements of an increasing $\leq$-chain $B_0 \leq B_1 \leq \cdots$ of $\mc{K}$-structures. Take any $A_i$ ($i \in \N$). Then $A_i \sub B_j$ for some $j \in \N$, and $B_j \sub A_k$ for some $k \geq i$. As $A_i \leq A_k$, by (L3) we have $A_i \leq B_j$.
	
    Let $g \in \Aut(M)$. Take a pair $A_i \leq A_j$ ($i < j$). Then $g|_{A_j} : A_j \to gA_j$ is an isomorphism, so $g|_{A_j} \in \mc{S}$, and so $gA_i \leq gA_j$. Thus $M$ is also the union of the increasing $\leq$-chain $gA_0 \leq gA_1 \leq \cdots$. So if $A \leq M$, then $gA \leq M$: that is, all $g \in \Aut(M)$ preserve $\leq$.

\subsection{\Fr theory for strong classes} \label{strongFrsec}
	
    We now develop an analogue of the classical \Fr theory for strong classes. We omit the proofs and state the relevant material as a series of definitions and lemmas. (For the classical \Fr theory, originally developed in \cite{Fra54}, see \cite[Chapter 7]{Hod93}, and for a more complete treatment of \Fr theory for strong classes, see \cite[Section 3]{Eva13}.)
    	
    \begin{defn}
        Let $(\mc{K}, \leq)$ be a strong class of $\mc{L}$-structures.
        \begin{itemize}
            \item $(\mc{K}, \leq)$ has the \emph{joint embedding property} (JEP) if for $A_0, A_1 \in \mc{K}$, there is $B \in \mc{K}$ with $\leq$-embeddings $f_0 : A_0 \to B, f_1 : A_1 \to B$.
            \item $(\mc{K}, \leq)$ has the \emph{amalgamation property} (AP) if, for any pair of $\leq$-embeddings $B_0 \xleftarrow{f_0} A \xrightarrow{f_1} B_1$, there exists $C \in \mc{K}$ and a pair of $\leq$-embeddings $B_0 \xrightarrow{g_0} C \xleftarrow{g_1} B_1$ such that $g_0 \circ f_0 = g_1 \circ f_1$.
            \item For $A, B_0, B_1 \in \mc{K}$ with $A \leq B_0, B_1$, the \emph{free amalgam} $C$ of $B_0, B_1$ over $A$ is the $\mc{L}$-structure $C$ whose domain is the disjoint union of $B_0, B_1$ over $A$ and whose relations $R^C$ are exactly the unions $R^{B_0} \cup R^{B_1}$ of the relations $R^{B_0}, R^{B_1}$ on $B_0, B_1$ (for $R$ a relation symbol in $\mc{L}$). If for all $\mc{L}$-structures $A, B_0, B_1 \in \mc{K}$ with $A \leq B_0, B_1$ we have that the free amalgam $C$ of $B_0, B_1$ over $A$ is in $\mc{K}$ with $B_0, B_1 \leq C$, then we say that $(\mc{K}, \leq)$ is a \emph{free amalgamation class}.
        \end{itemize}
		
        We will usually not mention the distinguished class of embeddings in our terminology, as it will be clear from context and the fact that we are working with strong classes. For instance, we say that $(\mc{K}, \leq)$ has the amalgamation property, even though perhaps more strictly we should say that $(\mc{K}, \leq)$ has the $\leq$-amalgamation property. 
    \end{defn}
	
    In the following definitions and lemmas, let $(\mc{K}, \leq)$ be a strong class, and let $M$ be the union of an increasing $\leq$-chain $A_1 \leq A_2 \leq \cdots$ of finite structures in $(\mc{K}, \leq)$.
    
    \begin{defn}
        The $\leq$-\emph{age} of $M$, written $\Age_\leq(M)$, is the class of $A \in \mc{K}$ such that there is a $\leq$-embedding $A \to M$.
    \end{defn}
	
    The class $(\Age_\leq(M), \leq)$ is a $\leq$-hereditary strong subclass of $(\mc{K}, \leq)$, and it has the $\leq$-joint embedding property.
	
    \begin{defn}
        $M$ has the $\leq$-\emph{extension property} if for all $A, B \in \Age_\leq(M)$ and $\leq$-embeddings $f : A \to M, g : A \to B$, there exists a $\leq$-embedding $h : B \to M$ with $h \circ g = f$.
		
        $M$ is $\leq$-\emph{ultrahomogeneous} if each isomorphism $f : A \to A'$ between strong substructures $A, A'$ of $M$ extends to an automorphism of $M$.

        (Again, when it is clear from context, we will often omit the $\leq$- prefix and just say that $M$ has the extension property or is ultrahomogeneous.) 
    \end{defn}
	
    \begin{lem}
        Let $M'$ also be a union of an increasing $\leq$-chain in $\mc{K}$. Suppose $M, M'$ have the same $\leq$-age and both have the $\leq$-extension property. Then $M, M'$ are isomorphic.
    \end{lem}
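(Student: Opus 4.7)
The plan is a standard back-and-forth argument, adapted to strong classes by using the $\leq$-extension property in place of ordinary ultrahomogeneity. Enumerate $M = \{m_i : i \in \N\}$ and $M' = \{m'_i : i \in \N\}$. I construct, by recursion on $n$, an increasing chain $A_0 \leq A_1 \leq \cdots$ of finite strong substructures of $M$ with $\bigcup_n A_n = M$, an analogous chain $A'_0 \leq A'_1 \leq \cdots$ in $M'$ with $\bigcup_n A'_n = M'$, and compatible isomorphisms $f_n : A_n \isoto A'_n$ satisfying $f_{n+1}|_{A_n} = f_n$. The union $f = \bigcup_n f_n$ will then supply an $\mc{L}$-isomorphism $M \isoto M'$.

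Begin with $A_0 = A'_0 = \emptyset$ and $f_0$ the empty map. At a \emph{forth} step (stage $2n+1$), let $m$ be the least-indexed element of $M$ not in $A_{2n}$. By Definition \ref{infstrong} applied to the defining $\leq$-chain of $M$, there is a finite $B \leq M$ containing $A_{2n} \cup \{m\}$; by (L3), $A_{2n} \leq B$, and $B \in \Age_\leq(M) = \Age_\leq(M')$. By induction $A'_{2n} \leq M'$, so $f_{2n}$ is a $\leq$-embedding $A_{2n} \to M'$; applying the $\leq$-extension property of $M'$ to this $\leq$-embedding together with the $\leq$-embedding $A_{2n} \hookrightarrow B$ yields a $\leq$-embedding $h : B \to M'$ with $h|_{A_{2n}} = f_{2n}$. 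Set $A_{2n+1} = B$, $A'_{2n+1} = h(B)$, and $f_{2n+1} = h$. \emph{Back} steps (stage $2n+2$) are entirely symmetric: repeat the construction starting from the first element of $M'$ not in $A'_{2n+1}$, using the extension property of $M$ this time to pull the new element back into $M$.

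The alternation guarantees $\bigcup_n A_n = M$ and $\bigcup_n A'_n = M'$, so $f = \bigcup_n f_n$ is a well-defined bijection $M \to M'$. Since $\mc{L}$ is relational and every relation instance in $M$ or $M'$ lies within some finite $A_n$ or $A'_n$ on which $f_n$ is already an isomorphism, both $f$ and $f^{-1}$ preserve all relations, so $f$ is an isomorphism of $\mc{L}$-structures.

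The only subtle point is the step ``produce a finite $B \leq M$ containing $A_{2n} \cup \{m\}$'' at each forth stage (and its symmetric analogue on the $M'$ side); this uses the chain-independence of Definition \ref{infstrong} (as observed in the paragraph immediately following that definition) together with (L3) to ensure that $A_{2n}$ is a strong substructure of the enlarged $B$. Once this is in place, the two extension properties drive the induction mechanically, and no further obstacle arises.
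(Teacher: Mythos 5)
Your back-and-forth argument is correct and is the standard proof of this uniqueness lemma; the paper itself omits the proof (deferring to \cite{Hod93} and \cite{Eva13}), and what you have written is the expected adaptation of the classical \Fr argument to strong classes, with the $\leq$-extension property driving each forth and back step. The only point to tidy up is the base case: starting from $A_0 = A'_0 = \emptyset$ tacitly assumes $\emptyset$ is $\leq$-closed in both $M$ and $M'$, which holds in the graph classes of this paper but is not part of the hypotheses of the lemma as stated; the clean fix is to instead pick any finite nonempty $B \leq M$, use $\Age_\leq(M) = \Age_\leq(M')$ to get a $\leq$-embedding $g : B \to M'$, and set $A_0 = B$, $A'_0 = g(B)$, $f_0 = g$.
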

	
    \begin{lem}
        $M$ is $\leq$-ultrahomogeneous iff $M$ has the $\leq$-extension property.
    \end{lem}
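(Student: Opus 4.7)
\emph{Forward ($\Rightarrow$):} Given $\leq$-embeddings $f: A \to M$ and $g: A \to B$ with $A, B \in \Age_\leq(M)$, the plan is to pick any $\leq$-embedding $h': B \to M$ (which exists since $B \in \Age_\leq(M)$). Then $h' \circ g: A \to M$ is a $\leq$-embedding by (S2), so both $f(A)$ and $h'(g(A))$ are strong substructures of $M$, and the rule $h'(g(a)) \mapsto f(a)$ defines an isomorphism between them. By ultrahomogeneity, this extends to some $\sigma \in \Aut(M)$. Setting $h := \sigma \circ h'$, the map $h$ is a $\leq$-embedding (since automorphisms of $M$ preserve $\leq$, as noted after Definition \ref{infstrong}), and by construction $h \circ g = f$.

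\emph{Backward ($\Leftarrow$):} I will prove ultrahomogeneity by a back-and-forth argument. Fix an isomorphism $f: A \to A'$ between strong substructures $A, A' \leq M$ (both finite, by Definition \ref{infstrong}), and enumerate $M = \{m_1, m_2, \ldots\}$. The plan is to build inductively isomorphisms $f_i: A^{(i)} \to A'^{(i)}$ of finite strong substructures of $M$, with $f_0 = f$, each $f_{i+1}$ extending $f_i$, and each $m_k$ eventually contained in both $A^{(i)}$ and $A'^{(i)}$ (alternating even/odd steps take care of this). At an even step, given $f_i$ and the first $m \in M \setminus A^{(i)}$ in the enumeration: since $A^{(i)} \leq M$ pick $k$ with $A^{(i)} \leq A_k$; pick $\ell$ with $m \in A_\ell$; and set $j = \max(k, \ell)$. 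By (L3), $A^{(i)} \leq A_j$ and $m \in A_j \leq M$. Then $f_i : A^{(i)} \to M$ is a $\leq$-embedding (its image $A'^{(i)}$ is $\leq M$) and the inclusion $A^{(i)} \hookrightarrow A_j$ is a $\leq$-embedding, so the extension property supplies a $\leq$-embedding $h: A_j \to M$ extending $f_i$. Set $A^{(i+1)} := A_j$, $A'^{(i+1)} := h(A_j)$, $f_{i+1} := h$. Odd steps proceed symmetrically using $f_i^{-1} : A'^{(i)} \to M$ (a $\leq$-embedding since $f_i^{-1}(A'^{(i)}) = A^{(i)} \leq M$) to enlarge the codomain. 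The union $\bigcup_i f_i$ is then an automorphism of $M$ extending $f$.

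The main technical point is in the back-and-forth step: ensuring at each stage that the enlarged $A^{(i+1)}$ is a finite $\leq$-closed substructure of $M$, so that the extension property remains applicable at the next step. Taking $A^{(i+1)} = A_j$ directly from the defining $\leq$-chain $(A_n)$ --- rather than, say, trying to adjoin $m$ to $A^{(i)}$ as a singleton --- is exactly what makes this work, with (L3) providing the bookkeeping to guarantee $A^{(i)} \leq A_j$. Everything else is a routine verification.
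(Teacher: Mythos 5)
The paper states this lemma without proof (it explicitly omits proofs in the strong-\Fr section, deferring to \cite[Section 3]{Eva13}), and your argument is the standard back-and-forth adapted to $\leq$-embeddings, which is exactly the expected route. Your proof is correct; the only quibble is that in the even step, $A^{(i)} \leq A_j$ follows most directly from transitivity (L2) via $A^{(i)} \leq A_k \leq A_j$ rather than (L3), though (L3) also works if you first pass to some $A_n$ with $n \geq j$ and $A^{(i)} \leq A_n$.
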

    \begin{lem}
        Suppose $M$ is $\leq$-ultrahomogeneous. Then the class $(\Age_\leq(M), \leq)$ has the amalgamation property.
    \end{lem}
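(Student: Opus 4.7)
The plan is to use the $\leq$-extension property of $M$ (equivalent to ultrahomogeneity by the previous lemma) to realise any span inside $M$, and then bound the amalgam by a structure in the $\leq$-chain defining $\Age_\leq(M)$. So, suppose we are given $\leq$-embeddings $B_0 \xleftarrow{f_0} A \xrightarrow{f_1} B_1$ with $A, B_0, B_1 \in \Age_\leq(M)$. Since $A$ lies in the $\leq$-age, pick any $\leq$-embedding $h : A \to M$.

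Next, I would apply the $\leq$-extension property twice. Since $f_0 : A \to B_0$ is a $\leq$-embedding with $B_0 \in \Age_\leq(M)$, the extension property gives a $\leq$-embedding $g_0 : B_0 \to M$ with $g_0 \circ f_0 = h$; symmetrically, there is a $\leq$-embedding $g_1 : B_1 \to M$ with $g_1 \circ f_1 = h$. The composition $g_0 \circ f_0 = g_1 \circ f_1$ then holds by construction, so the square commutes; the only remaining issue is to realise this commuting square inside a single finite member of $\mc{K}$ that strongly contains both $g_0(B_0)$ and $g_1(B_1)$.

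This is where the chain definition of ``strong substructure of $M$'' (Definition \ref{infstrong}) does the work. Since $g_0(B_0) \leq M$ and $g_1(B_1) \leq M$, there exist indices $i_0, i_1$ with $g_0(B_0) \leq A_{i_0}$ and $g_1(B_1) \leq A_{i_1}$ in the fixed $\leq$-chain $A_1 \leq A_2 \leq \cdots$ whose union is $M$. Setting $i = \max(i_0, i_1)$, property (L3) applied inside the chain (together with transitivity of $\leq$) yields $g_0(B_0), g_1(B_1) \leq A_i$. Take $C := A_i$, which lies in $\Age_\leq(M)$ since $A_i \leq M$. Then $g_0 : B_0 \to C$ and $g_1 : B_1 \to C$, regarded as maps into $C$, are $\leq$-embeddings by (S3), and $g_0 \circ f_0 = g_1 \circ f_1 = h$, which is exactly the amalgamation property.

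The only subtlety, and the place one has to be a little careful, is the last step: having used extensions to land everything in $M$, one must not simply take $C$ to be the union $g_0(B_0) \cup g_1(B_1)$ as a subset of $M$, because this set need not itself be $\leq$-closed in $M$ (and thus need not belong to $\Age_\leq(M)$). Moving up to a member $A_i$ of the defining chain circumvents this; verifying that such an $A_i$ works is the main technical content of the argument, and it reduces to the independence of the notion $A \leq M$ from the choice of chain plus axioms (L1)–(L3) for $\leq$.
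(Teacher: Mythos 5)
Your proof is correct and follows the standard extension-property argument for strong Fra\"{i}ss\'{e} classes (the paper itself omits the proof and defers to the literature). Two small citations are off: the step $g_0(B_0) \leq A_i$ for $i = \max(i_0, i_1)$ uses transitivity (L2) along the chain rather than (L3), and viewing $g_0 : B_0 \to A_i$ as a $\leq$-embedding follows from (S1) together with (S2) (compose the isomorphism $B_0 \to g_0(B_0)$ with the strong inclusion $g_0(B_0) \hookrightarrow A_i$) rather than from (S3).
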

	
    \begin{defn}
        Let $(\mc{K}, \leq)$ be a strong class. We say that $(\mc{K}, \leq)$ is an \emph{amalgamation class} or \emph{\Fr class} if $(\mc{K}\, \leq)$ contains countably many isomorphism types, contains structures of arbitrarily large finite size, and has the joint embedding and amalgamation properties.
    \end{defn}
	
    \begin{thm}[\Frn-Hrushovski]
        Let $(\mc{K}, \leq)$ be an amalgamation class. Then there is a structure $M$ which is a union of an increasing $\leq$-chain in $\mc{K}$ such that $M$ is $\leq$-ultrahomogeneous and $\Age_\leq(M) = \mc{K}$, and $M$ is unique up to isomorphism amongst structures with these properties.
		
        We call this structure the \emph{\Fr limit} or \emph{generic structure} of $\mc{K}$.
    \end{thm}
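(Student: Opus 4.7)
The uniqueness clause is immediate from earlier material: $\leq$-ultrahomogeneity is equivalent to the $\leq$-extension property, and the preceding lemma states that any two $\leq$-chain unions in $\mc{K}$ with the same $\leq$-age and the extension property are isomorphic. So the substantive work is existence.

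The plan for existence is to build $M$ as the union of an increasing $\leq$-chain $A_0 \leq A_1 \leq \cdots$ in $\mc{K}$ via a standard dovetailing argument. At each stage I would solve either an \emph{appearance task} (ensuring that a prescribed $C \in \mc{K}$ occurs as a strong substructure of some later $A_m$, handled via JEP) or an \emph{extension task} (given $\leq$-embeddings $f : A \to A_n$ and $g : A \to B$ with $A, B \in \mc{K}$, finding $A_m$, $m \geq n$, and a $\leq$-embedding $h : B \to A_m$ such that $h \circ g$ equals the composition of $f$ with the inclusion $A_n \hookrightarrow A_m$). Given an extension task, applying AP to the span $A_n \xleftarrow{f} A \xrightarrow{g} B$ produces $C \in \mc{K}$ with compatible $\leq$-embeddings $A_n \hookrightarrow C$ and $B \hookrightarrow C$; identifying $A_n$ with its image in $C$, we set $A_{n+1} = C$. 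Axioms (S1)--(S3) ensure the induced inclusion maps along the chain are $\leq$-embeddings.

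The key combinatorial observation is that the pool of tasks is countable: $\mc{K}$ contains only countably many isomorphism types, each $A_n$ is finite, and so over each fixed $A_n$ there are only countably many isomorphism types of span $A_n \leftarrow A \rightarrow B$. Enumerate all such tasks across all finite stages and dovetail so that each receives attention at some finite step. Setting $M = \bigcup_n A_n$, the appearance tasks yield $\Age_\leq(M) = \mc{K}$ (any finite $A \leq M$ already lies strongly in some $A_n \in \mc{K}$), and the completed extension tasks give $M$ the $\leq$-extension property, hence by the earlier lemma $M$ is $\leq$-ultrahomogeneous.

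The main obstacle is essentially the bookkeeping: organising the enumeration so that every extension problem that could possibly arise over any $A_n$ really is addressed at some finite stage (using that only isomorphism types of spans over $A_n$ need be considered, since later automorphisms of the chain will transport one solution to another), and verifying that solving one task never closes off the possibility of solving a later one --- the latter holds because each $A_{n+1}$ is itself chosen inside $\mc{K}$, so any extension problem that arises subsequently can be resolved by a further application of AP.
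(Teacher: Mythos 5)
The paper deliberately omits a proof of this theorem: the subsection states explicitly that proofs are omitted and defers to \cite[Section 3]{Eva13} and \cite[Chapter 7]{Hod93}. Your argument --- uniqueness via the preceding lemmas (ultrahomogeneity $\Leftrightarrow$ extension property, and the isomorphism lemma), existence via a dovetailing $\leq$-chain construction handling JEP (appearance) and AP (extension) tasks, with countability of isomorphism types of spans making the enumeration possible and the chain axioms (S1)--(S3) keeping deferred tasks solvable at later stages --- is the standard Fra\"iss\'e--Hrushovski construction that those references give, and it is correct.
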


\section{\texorpdfstring{$\omega$}{omega}-categorical sparse graphs} \label{omegacatsparsesec}

    The material in this section is based on \cite{EHN19} and the unpublished notes \cite{Eva13}, with some minor modifications, and constitutes further background required for Section \ref{resultsec}.

    We now construct an amalgamation class of sparse graphs whose \Fr limit is $\omega$-categorical. Specifically, this will be a version of the $\omega$-categorical Hrushovski construction $M_F$, first seen in \cite{Hru88}. We will do this by defining a notion of closure (i.e.\ a particular notion of strong substructure), $d$-closure, which will be uniformly bounded. The relevance of this can be seen in the lemma below.
	
    \begin{lem}[{\cite[Remark 2.8]{EHN19}}] \label{unifbdomegacat}
        Let $(\mc{K}, \leq)$ be an amalgamation class such that for each $n \in \N$, $(\mc{K}, \leq)$ has only finitely many isomorphism classes of structures of size $n$. Suppose there is a function $h : \N \to \N$ such that for $B \in \mc{K}$ and $A \sub B$ with $|A| \leq n$, there exists $A \sub C \leq B$ with $|C| \leq h(n)$. 
        
        Then the \Fr limit $M$ of $(\mc{K}, \leq)$ is $\omega$-categorical.
    \end{lem}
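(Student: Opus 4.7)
My plan is to deduce $\omega$-categoricity of the \Fr limit $M$ via the Ryll-Nardzewski theorem: it suffices to show that $\Aut(M)$ is oligomorphic, i.e.\ has only finitely many orbits on $M^n$ for each $n \in \N$. The underlying idea is that every finite tuple of $M$ is contained in a strong substructure of $M$ of uniformly bounded size, and that the $\Aut(M)$-orbit of a tuple is determined, via $\leq$-ultrahomogeneity of $M$, by the isomorphism type of such a strong substructure with the tuple distinguished as a sequence of constants.

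Concretely, fix $n \in \N$ and $\bar a \in M^n$, and let $A$ be the (finite) set enumerated by $\bar a$. Writing $M$ as the union of an increasing $\leq$-chain $A_0 \leq A_1 \leq \cdots$ of finite $\mc{K}$-structures (supplied by the \Frn-Hrushovski theorem), there is some $i$ with $A \sub A_i$, and $A_i \leq M$. Applying the uniform boundedness hypothesis to $A \sub A_i$ gives $A \sub C \leq A_i$ with $|C| \leq h(n)$, and then $C \leq M$ by transitivity of $\leq$ in the limit (Definition \ref{infstrong} and the independence-of-chain observation following it).

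Next I would count the possible isomorphism types of the resulting pair $(C, \bar a)$: since $\mc{K}$ has only finitely many isomorphism classes of structures of any size $\leq h(n)$, and since each such $C$ carries only finitely many $n$-tuples, the number of isomorphism types of pairs $(C, \bar a)$ arising this way is some finite number $N(n)$. The final step is the punchline: if $(C, \bar a)$ and $(C', \bar a')$ are isomorphic via a map sending $\bar a \mapsto \bar a'$, with $C, C' \leq M$, then by $\leq$-ultrahomogeneity this isomorphism extends to an automorphism of $M$ carrying $\bar a$ to $\bar a'$. Hence $\Aut(M)$ has at most $N(n)$ orbits on $M^n$, so $\Aut(M)$ is oligomorphic, and by Ryll-Nardzewski $M$ is $\omega$-categorical.

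No serious obstacle is expected here — the only point that requires a moment's care is passing from $C \leq A_i$ to $C \leq M$, which uses the definition of ``strong substructure'' in the union of a $\leq$-chain from Section \ref{strongclassessec}. Everything else is a direct application of Ryll-Nardzewski together with the basic properties of strong \Fr limits developed in Section \ref{strongFrsec}.
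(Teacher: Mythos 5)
Your proposal is correct and takes essentially the same approach as the paper's proof: both reduce to Ryll--Nardzewski, extend each $n$-tuple to a $\leq$-closed substructure of size at most $h(n)$, and use $\leq$-ultrahomogeneity together with the finiteness of isomorphism types of bounded size to bound the number of orbits on $M^n$. The paper's version is merely more condensed, phrasing the final step in terms of orbits on the set $\{\ov{c} \in M^{h(n)} : \ov{c} \leq M\}$ rather than isomorphism types of pairs $(C, \bar a)$, but the content is identical.
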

    (The function $h$ will be a uniform bound on the size of $\leq$-closures.)
    \begin{proof}
        By the Ryll-Nardzewski theorem, it suffices to show that, for $n \geq 1$, $\Aut(M)$ has finitely many orbits on $M^n$. Take $n \geq 1$. As there are only finitely many isomorphism types of structures of size $\leq h(n)$ in $\mc{K}$ and $M$ is $\leq$-ultrahomogeneous, we have that $\Aut(M)$ has finitely many orbits on $\{\ov{c} \in M^{h(n)} : \ov{c} \leq M\}$. We can extend any $\ov{a} \in M^n$ to an element of this set (note that in ordered tuples, we can have repeats of elements). If $\ov{a}, \ov{a}'$ are not in the same orbit, then nor will their extensions be, so we are done.
    \end{proof}
	
    \begin{defn}
        Let $\Cgzero$ be the class of finite graphs $A$ such that for nonempty $B \sub A$, we have $\delta(B) > 0$.
    \end{defn}

    We note that for $A \in \Cgzero$, if $A' \sub A$ then $A' \in \Cgzero$.
 
    \begin{defn}
        Take $A, B \in \Cgzero$ with $A \sub B$. We say that $A$ is $d$-closed in $B$, written $A \leq_d B$, if for all $A \subsetneq C \sub B$, we have $\delta(A) < \delta(C)$.
    \end{defn}

    \begin{lem}[Submodularity. {\cite[Lemma 3.7]{Eva13}}] \label{submodlemma}
        Let $A$ be a finite graph, and let $B, C \sub A$. Then $\delta(B \cup C) \leq \delta(B) + \delta(C) - \delta(B \cap C)$. We have equality iff $E_{B \cup C} = E_B \cup E_C$, i.e.\ $B, C$ are freely amalgamated over $B \cap C$ in $A$.
    \end{lem}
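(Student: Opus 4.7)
The plan is to unfold the definition of $\delta$ and reduce the claim to a statement about vertex and edge counts, where inclusion-exclusion does most of the work.

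First I would recall that $\delta(X) = 2|X| - |E_X|$, and use the usual inclusion-exclusion identity $|B \cup C| = |B| + |C| - |B \cap C|$ for vertex sets. Substituting into the inequality to be proven, the vertex contributions cancel exactly, and the statement reduces to the edge-count inequality
\[
|E_B| + |E_C| - |E_{B \cap C}| \;\leq\; |E_{B \cup C}|,
\]
with equality iff $E_{B \cup C} = E_B \cup E_C$.

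The next step is to observe that $E_B \cap E_C = E_{B \cap C}$: an edge of $A$ lies in both $E_B$ and $E_C$ precisely when both of its endpoints lie in $B$ and in $C$, i.e.\ in $B \cap C$. Hence by inclusion-exclusion applied to the (finite) sets $E_B, E_C$,
\[
|E_B \cup E_C| = |E_B| + |E_C| - |E_{B \cap C}|.
\]
Combining this with the obvious inclusion $E_B \cup E_C \subseteq E_{B \cup C}$ (any edge between two vertices of $B$ or two vertices of $C$ is an edge between two vertices of $B \cup C$) yields the required inequality.

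For the equality clause, the only inequality used above is $|E_B \cup E_C| \leq |E_{B \cup C}|$, so equality in the submodularity bound holds exactly when $E_{B \cup C} = E_B \cup E_C$. This is precisely the statement that no edge of $A$ between $B$ and $C$ has one endpoint in $B \setminus C$ and the other in $C \setminus B$, which is the graph-theoretic content of $B$ and $C$ being freely amalgamated over $B \cap C$ inside $A$. There is no real obstacle here; the argument is entirely a matter of careful bookkeeping with inclusion-exclusion.
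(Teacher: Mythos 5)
Your proof is correct and is precisely the direct computation that the paper implicitly intends (it labels the lemma ``straightforward'' and omits the proof). Unfolding $\delta$, cancelling vertex contributions via inclusion–exclusion, and reducing to $E_B \cup E_C \subseteq E_{B \cup C}$ together with $E_B \cap E_C = E_{B \cap C}$ is the standard argument, and your identification of the equality case is also right.
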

    
    The proof of the above lemma is straightforward. We now prove some basic properties of $\leq_d$.
    
    \begin{lem}[{\cite[Lemma 3.10]{Eva13}}] \label{basic leq_d properties}
        Let $B \in \Cgzero$.
        \begin{enumerate}
            \item $A \leq_d B$, $X \sub B$ $\Rightarrow$ $A \cap X \leq_d X$.
            \item $A \leq_d C \leq_d B$ $\Rightarrow$ $A \leq_d B$.
            \item $A_1, A_2 \leq_d B$ $\Rightarrow$ $A_1 \cap A_2 \leq_d B$.
        \end{enumerate}
    \end{lem}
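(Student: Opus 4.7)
The plan is to prove all three parts using the submodularity lemma (Lemma \ref{submodlemma}), exploiting the equivalent reformulation that $A \leq_d B$ holds iff $\delta(A) \leq \delta(C)$ for every $A \sub C \sub B$, with equality only when $A = C$. For part (1), I would take an arbitrary $Y$ with $A \cap X \subsetneq Y \sub X$ and show $\delta(A \cap X) < \delta(Y)$. Since $Y \sub A$ would force $Y \sub A \cap X$, contradicting $A \cap X \subsetneq Y$, we have $A \subsetneq A \cup Y \sub B$, so $A \leq_d B$ gives $\delta(A) < \delta(A \cup Y)$. Submodularity yields $\delta(A \cup Y) \leq \delta(A) + \delta(Y) - \delta(A \cap Y)$, and since $A \cap Y = A \cap X$ (because $A \cap X \sub Y \sub X$), rearranging gives $\delta(A \cap X) < \delta(Y)$.

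For part (2), given $A \subsetneq D \sub B$, I would split on whether $D \sub C$. If $D \sub C$, then $A \leq_d C$ immediately yields $\delta(A) < \delta(D)$. Otherwise $C \subsetneq C \cup D \sub B$, so $C \leq_d B$ gives $\delta(C) < \delta(C \cup D)$; combined with the submodularity estimate $\delta(C \cup D) \leq \delta(C) + \delta(D) - \delta(C \cap D)$, this produces $\delta(C \cap D) < \delta(D)$. Since $A \sub C \cap D \sub C$, the property $A \leq_d C$ gives $\delta(A) \leq \delta(C \cap D) < \delta(D)$, as required.

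Part (3) then follows formally from (1) and (2): applying (1) with $X = A_2$ yields $A_1 \cap A_2 \leq_d A_2$, and combining with $A_2 \leq_d B$ via transitivity (2) gives $A_1 \cap A_2 \leq_d B$. The main obstacle throughout is simply the bookkeeping of proper versus improper containments needed to trigger the strict inequality in the definition of $\leq_d$ at the correct step; the mathematical content is entirely in choosing the right pair of sets to feed into the submodular inequality.
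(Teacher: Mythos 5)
Your proof is correct and follows essentially the same route as the paper: part (1) by submodularity applied to $A$ and $Y$, part (3) by combining (1) and (2). The only cosmetic difference is in part (2), where the paper deduces the result by invoking part (1) (applied to $C \leq_d B$ and the superset $X$), whereas you rerun the submodularity inequality directly on $C$ and $D$; unrolled, the two arguments coincide.
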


    \begin{proof} \hfill 
        \begin{enumerate}
            \item Take $A \cap X \subsetneq Y \sub X$. Note that $A \cap Y = A \cap X$. By submodularity,
            \begin{align*}
                \delta(A \cup Y) &\leq \delta(A) + \delta(Y) - \delta(A \cap Y)\\ &= \delta(A) + \delta(Y) - \delta(A \cap X),
            \end{align*}
            so $\delta(Y) - \delta(A \cap X) \geq \delta(A \cup Y) - \delta(A) > 0$, using the fact that $A \subsetneq A \cup Y \sub B$.
			
            \item We may assume $A \neq C$. Take $A \subsetneq X \sub B$. By (1) applied to $C \leq_d B$ and $X \sub B$, we have $C \cap X \leq_d X$. Also we have $A \sub C \cap X \sub C$. So, as $A \leq_d C$, we have $\delta(A) < \delta(X)$.

            \item By (1), $A_1 \cap A_2 \leq_d A_1$. Then use (2).
	\end{enumerate}
    \end{proof}

    For $B \in \Cgzero$, by part (3) of the previous lemma we see that for $A \sub B$ we have that $\bigcap_{A \sub A' \leq_d B} A' \leq_d B$, so we can define the \emph{$d$-closure} of $A$ in $B$ as this intersection, written $\cl_B^d(A)$.
	
    \begin{lem}[{\cite[Lemma 3.12]{Eva13}}] \label{predimclosure}
        Let $B \in \Cgzero$ and let $A \sub B$. Then $\delta(A) \geq \delta(\cl_B^d(A))$.
    \end{lem}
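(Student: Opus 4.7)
The plan is to sandwich $C := \cl_B^d(A)$ inside a maximal $d$-closed set containing $A$ whose predimension is bounded by $\delta(A)$, and then use Lemma \ref{basic leq_d properties} to transfer $d$-closure from $B$ down to this intermediate set.

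First, I would consider the nonempty collection of subsets $X$ with $A \subseteq X \subseteq B$ and $\delta(X) \leq \delta(A)$, and pick $X_0$ in this collection of maximum cardinality. I claim $X_0 \leq_d B$: if not, there would exist $X_0 \subsetneq Y \subseteq B$ with $\delta(Y) \leq \delta(X_0)$, and then $\delta(Y) \leq \delta(A)$, so $Y$ would lie in the same collection while being strictly larger than $X_0$, contradicting maximality. Thus $X_0$ is a $d$-closed subset of $B$ containing $A$, so by the definition of $d$-closure as an intersection we obtain $C \subseteq X_0$.

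Now I would invoke Lemma \ref{basic leq_d properties}. Since $C \leq_d B$ (by part (3) applied to the intersection defining the closure) and $X_0 \subseteq B$, part (1) yields $C \cap X_0 \leq_d X_0$, which simplifies to $C \leq_d X_0$ because $C \subseteq X_0$. If $C = X_0$, then $\delta(C) = \delta(X_0) \leq \delta(A)$. Otherwise $C \subsetneq X_0$, and the definition of $\leq_d$ gives $\delta(C) < \delta(X_0) \leq \delta(A)$. Either way $\delta(A) \geq \delta(C)$, as required.

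The only delicate step is establishing that the extremal $X_0$ is $d$-closed in $B$; once that is in hand, everything else is a direct application of the intersection and transitivity properties already proven in Lemma \ref{basic leq_d properties}. I do not expect any genuine obstacle, since submodularity (Lemma \ref{submodlemma}) is not even needed here — it was already absorbed into the proof of those basic properties of $\leq_d$.
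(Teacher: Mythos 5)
Your proof is correct and follows essentially the same strategy as the paper's: both identify an extremal set sandwiching $\cl_B^d(A)$ inside a $d$-closed subset of $B$ with predimension at most $\delta(A)$. The only cosmetic differences are that the paper uses a two-stage extremal choice (minimize $\delta$, then maximize cardinality) where you use a one-stage one (maximize cardinality subject to $\delta \leq \delta(A)$), and the paper concludes directly from $\cl_B^d(A) \leq_d B$ with $\cl_B^d(A) \sub X_0 \sub B$ rather than routing through part (1) of Lemma \ref{basic leq_d properties}, a small but harmless detour on your part.
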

    \begin{proof}
        Amongst all $A \sub X \sub B$, consider those for which $\delta(X)$ is smallest, and then out of these choose a $C$ of greatest size. By the first stage of selection, we have $\delta(C) \leq \delta(A)$, and by the second stage, if $C \subsetneq D \sub B$ then $\delta(C) < \delta(D)$, so $C \leq_d B$. So $\cl_B^d(A) \sub C \sub B$, and as $\cl_B^d(A) \leq_d B$, we have $\delta(\cl_B^d(A)) \leq \delta(C)$.
	\end{proof}
 
    \begin{lem}[{\cite[Lemma 3.15]{Eva13}}] \label{Cgzfreeamalg}
        $(\mc{C}_{>0}, \leq_d)$ is a free amalgamation class.
    \end{lem}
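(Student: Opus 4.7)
The plan is to verify first that $(\Cgzero, \leq_d)$ is a strong class, and then to establish the free amalgamation property. The strong class conditions are largely immediate: (S1) holds because for any isomorphism $f : A \to B$ the condition $f(A) \leq_d B$ is vacuous; (S2) follows from transitivity of $\leq_d$ (Lemma~\ref{basic leq_d properties}(2)) together with the fact that $\leq_d$ is preserved by isomorphism; and (S3) is an immediate consequence of the definition of $\leq_d$, since $A \leq_d C$ and $A \sub B \sub C$ give $A \leq_d B$ by restricting the universal quantifier to subsets of $B$.

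For the free amalgamation property, fix $A, B_0, B_1 \in \Cgzero$ with $A \leq_d B_0$ and $A \leq_d B_1$, and let $C$ be the free amalgam of $B_0, B_1$ over $A$. The central observation, on which everything else rests, is that for any $X \sub C$, writing $X_i = X \cap B_i$ for $i = 0, 1$, we have $X \cap A = X_0 \cap X_1$ and $E_X = E_{X_0} \cup E_{X_1}$ (the free amalgam introduces no edges between $B_0 \setminus A$ and $B_1 \setminus A$). The equality case of submodularity (Lemma~\ref{submodlemma}) then yields
\[
\delta(X) = \delta(X_0) + \delta(X_1) - \delta(X \cap A).
\]

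To see that $C \in \Cgzero$, I would take a nonempty $X \sub C$ and split on whether $X \cap A$ is empty. If $X \cap A = \emptyset$, the equation reduces to $\delta(X) = \delta(X_0) + \delta(X_1)$, which is positive: each summand is nonnegative, and at least one of $X_0, X_1$ is a nonempty subset of some $B_i \in \Cgzero$ and so contributes a strictly positive term. If $X \cap A \neq \emptyset$, applying Lemma~\ref{basic leq_d properties}(1) to $A \leq_d B_i$ with the set $X_i \sub B_i$ gives $X \cap A = A \cap X_i \leq_d X_i$, hence $\delta(X \cap A) \leq \delta(X_i)$ for $i = 0, 1$; combined with $\delta(X \cap A) > 0$ (since $X \cap A$ is a nonempty subset of $A \in \Cgzero$), the displayed equation gives $\delta(X) \geq \delta(X \cap A) > 0$.

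Finally, to show $B_0 \leq_d C$ (the case $B_1 \leq_d C$ being symmetric), take $B_0 \subsetneq X \sub C$. Since $A \sub B_0 \sub X$, we have $X_0 = B_0$ and $X \cap A = A$, so the equation becomes $\delta(X) = \delta(B_0) + \delta(X_1) - \delta(A)$. The strict inclusion $B_0 \subsetneq X$ forces $X$ to contain a vertex of $B_1 \setminus A$, so $A \subsetneq X_1 \sub B_1$; using $A \leq_d B_1$ we get $\delta(A) < \delta(X_1)$, hence $\delta(B_0) < \delta(X)$ as required. Once the submodular equality is in hand the whole argument is algebraic; the only delicate point is the case analysis around whether $X \cap A$ is empty in the verification of $C \in \Cgzero$, where the summands of the submodular equation must be tracked carefully.
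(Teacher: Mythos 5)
Your proof is correct, and it rests on the same core idea as the paper's: decompose $X \sub C$ as $X_0 \cup X_1$ with $X_i = X \cap B_i$, observe that the free amalgam forces $X_0 \cap X_1 = X \cap A$ and $E_X = E_{X_0} \cup E_{X_1}$, and then apply the equality case of submodularity (Lemma~\ref{submodlemma}) to obtain $\delta(X) = \delta(X_0) + \delta(X_1) - \delta(X \cap A)$. The organization differs in one place that is worth noting. You establish $C \in \Cgzero$ by a direct case split on whether $X \cap A$ is empty, whereas the paper sidesteps this by proving a \emph{strengthened} closure claim first: it shows $B_2 \leq_d E$ assuming only $A \leq_d B_1$ and $A \sub B_2$ (no need for $A \leq_d B_2$), and then gets $E \in \Cgzero$ for free from the transitivity chain $\varnothing \leq_d B_2 \leq_d E$. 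This is a small economy; your case analysis is sound, but it does work that the transitivity trick compresses into one line. You also include a verification of the strong class axioms (S1)--(S3) for $(\Cgzero, \leq_d)$, which the paper treats as already in hand via Lemma~\ref{basic leq_d properties}. Otherwise the two arguments are the same.
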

    \begin{proof}
        It only remains to check the free amalgamation property (which implies JEP). We prove a stronger claim. Given $A, B_1, B_2 \in \Cgzero$ such that $A \leq_d B_1$ and $A \subseteq B_2$, with $B_1, B_2 \subseteq E$, where $E$ is the free amalgam of $B_1, B_2$ over $A$, we claim that $B_2 \leq_d E$. Once we have the claim, note that $\varnothing \leq_d B_2 \leq_d E$ implies that $E \in \mc{C}_{>0}$.

        Take $B_2 \subsetneq X \subseteq E$. Then letting $Y = X \cap B_1$, we have $Y \supsetneq A$ and $X = B_2 \cup Y$, and $X$ is the free amalgam of $B_2, Y$ over $A$. So \[\delta(X) = \delta(B_2 \cup Y) = \delta(B_2) + \delta(Y) - \delta(A),\] and so, as $A \leq_d B_1$, \[\delta(X) - \delta(B_2) = \delta(Y) - \delta(A) > 0.\]
    \end{proof}
	
    The \Fr limit $M_{>0}$ of $(\Cgzero, \leq_d)$ is not $\omega$-categorical, as for $A \fin M_{>0}$, there is no uniform bound on $\left|\cl^d(A)\right|$ in terms of $|A|$.
	
    To construct $\omega$-categorical examples, as mentioned at the start of this section, we consider subclasses of $\mc{C}_{>0}$ in which $d$-closure is uniformly bounded.
	
    \begin{defn} \label{CFdef}
        Let $F : \R_{\geq 0} \to \R_{\geq 0}$ be a continuous, strictly increasing function with $F(0) = 0$ and $F(x) \to \infty$ as $x \to \infty$. We define
        \begin{center}
            $\mc{C}_F := \{B \in \mc{C}_{>0} : \delta(A) \geq F(|A|)$ \text{for all} $A \subseteq B\}$.
        \end{center}
    \end{defn}

    Note that if $B \in \CF$ and $C \sub B$, then $C \in \CF$.
 
    \begin{lem}[{\cite[Theorem 3.19]{Eva13}, \cite[Theorem 4.14]{EHN19}}] \hfill
        \begin{enumerate}
            \item For $B \in \mc{C}_F$, $A \subseteq B$, we have $|\cl^d_B(A)| \leq F^{-1}(2|A|)$.
            \item If $(\mc{C}_F, \leq_d)$ is an amalgamation class, then its \Fr limit $M_F$ is $\omega$-categorical.
        \end{enumerate}
    \end{lem}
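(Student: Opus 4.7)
My plan is to prove the two parts separately, with part (1) being a short calculation involving the defining inequality of $\mc{C}_F$ together with Lemma \ref{predimclosure}, and part (2) following immediately by feeding part (1) into Lemma \ref{unifbdomegacat}.

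For part (1), I would set $C = \cl^d_B(A)$ and chain together three observations. First, since $\mc{C}_F$ is hereditary under taking subsets (noted just after Definition \ref{CFdef}), both $A$ and $C$ lie in $\mc{C}_F$. Second, applying Lemma \ref{predimclosure} gives $\delta(C) \leq \delta(A)$. Third, from $C \in \mc{C}_F$ I get $F(|C|) \leq \delta(C)$, while directly from the definition $\delta(A) = 2|A| - |E_A| \leq 2|A|$. Combining these yields $F(|C|) \leq 2|A|$, and since $F$ is a strictly increasing continuous bijection from $\R_{\geq 0}$ to $\R_{\geq 0}$ (as guaranteed by the hypotheses on $F$ in Definition \ref{CFdef}), I can invert to conclude $|C| \leq F^{-1}(2|A|)$.

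For part (2), I would simply verify the hypotheses of Lemma \ref{unifbdomegacat} for $(\mc{C}_F, \leq_d)$. The class has only finitely many isomorphism types in each finite size since it consists of graphs and there are only finitely many graphs on a fixed vertex set up to isomorphism. For the uniform bound, I would define $h(n) := \lceil F^{-1}(2n) \rceil$ and, given $B \in \mc{C}_F$ with $A \subseteq B$ and $|A| \leq n$, take $C := \cl^d_B(A)$. By construction $A \subseteq C$, and $C \leq_d B$ follows from Lemma \ref{basic leq_d properties}(3) since $C$ is defined as an intersection of $\leq_d$-closed subsets of $B$. By part (1), $|C| \leq F^{-1}(2|A|) \leq F^{-1}(2n) \leq h(n)$, as $F^{-1}$ is increasing. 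Hence Lemma \ref{unifbdomegacat} applies and the \Fr limit $M_F$ is $\omega$-categorical.

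I do not anticipate a significant obstacle: part (1) is essentially a one-line calculation once Lemma \ref{predimclosure} is invoked, and part (2) is a direct application of the prepared Lemma \ref{unifbdomegacat}. The only subtlety worth being careful about is confirming that $F^{-1}$ is defined on all of $\R_{\geq 0}$ and is monotone, which follows from the assumptions that $F$ is continuous, strictly increasing, $F(0) = 0$, and $F(x) \to \infty$ as $x \to \infty$.
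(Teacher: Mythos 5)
Your proof is correct and matches the paper's argument essentially verbatim: part (1) is the chain $F(|\cl^d_B(A)|) \leq \delta(\cl^d_B(A)) \leq \delta(A) \leq 2|A|$ via Lemma \ref{predimclosure} and the definition of $\mc{C}_F$, and part (2) is a direct application of Lemma \ref{unifbdomegacat}. The only difference is that you spell out the verification of the hypotheses of Lemma \ref{unifbdomegacat} (finitely many isomorphism types per size, invertibility of $F$, choice of $h$), which the paper leaves implicit.
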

	
    \begin{proof} \hfill
        \begin{enumerate}
            \item From Lemma \ref{predimclosure}, as $\cl^d_B(A) \in \mc{C}_F$, we have $F(|\cl^d_B(A)|) \leq \delta(\cl^d_B(A)) \leq \delta(A) \leq 2|A|$.
            \item This follows from Lemma \ref{unifbdomegacat}.
        \end{enumerate}
    \end{proof}
	
    \begin{defn} \label{infleqd}
        Suppose that $(\CF, \leq_d)$ is an amalgamation class, and write $M_F$ for its \Fr limit. 
        
        For $A \sub M_F$ with $A$ infinite, we say that $A \leq_d M_F$ if $A \cap X \leq_d X$ for all finite $X \sub M_F$.
        
        (Note that if $A$ is finite, then $A \leq_d M_F$ iff $A \cap X \leq_d X$ for all finite $X \sub M_F$, by part (1) of Lemma \ref{basic leq_d properties}, so this definition is consistent.)
        
        Similarly we define $\cl^d_{M_F}(A)$ as the smallest $\leq_d$-closed subset of $M_F$ containing $A$. (This is well-defined: intersections of $\leq_d$-closed subsets of $M_F$ are $\leq_d$-closed, by part (2) of Lemma \ref{basic leq_d properties}.)

        Let $A$ be a graph, possibly infinite, which is embeddable in $M_F$. We say that an embedding $f : A \to M_F$ is a \emph{$\leq_d$-embedding} if $f(A) \leq_d M_F$.
    \end{defn}

    We now describe a method for constructing the control function $F$ to ensure that $(\CF, \leq_d)$ is a free amalgamation class.
 
    \begin{lem}[adapted from {\cite[Example 3.20]{Eva13}, \cite[Example 4.15]{EHN19}}] \label{CFfreeamalg}
        Let $n \in \N$. Let $F$ be as in Definition \ref{CFdef}, and assume additionally that:
        \begin{itemize}
            \item $F$ is piecewise smooth;
            \item its right derivative $F'$ is decreasing;
            \item $F'(x) \leq 1/x$ for $x > n$;
            \item for $A, B_1, B_2 \in \mc{C}_F$ with $A \leq_d B_1, B_2$ and $|B_1| < n, |B_2| < n$, the free amalgam of $B_1, B_2$ over $A$ lies in $\CF$.
        \end{itemize} 
        
        Then $(\mc{C}_F, \leq_d)$ is a free amalgamation class.
    \end{lem}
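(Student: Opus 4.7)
The plan: By Lemma \ref{Cgzfreeamalg}, $E \in \Cgzero$ and $B_1, B_2 \leq_d E$, so it remains to show $E \in \CF$, i.e.\ $\delta(X) \geq F(|X|)$ for every finite $X \sub E$. I proceed by induction on $|B_1| + |B_2|$. Given any proper subset $X \subsetneq E$, write $X$ as the free amalgam of $X_i := X \cap B_i$ over $X_0 := X_1 \cap X_2 = X \cap A$; Lemma \ref{basic leq_d properties}(1) gives $X_0 \leq_d X_i$ for each $i$, and since $|X_1| + |X_2| < |B_1| + |B_2|$ (precisely because $X \subsetneq E$), the inductive hypothesis applied to the triple $(X_0, X_1, X_2)$ yields $X \in \CF$ and hence $\delta(X) \geq F(|X|)$.

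It therefore suffices to verify $\delta(E) \geq F(|E|)$. The case $A = B_i$ reduces to $E = B_{3-i} \in \CF$, so I may assume $A \subsetneq B_i$ for each $i$. Since $A \leq_d B_i$ and $\delta$ is integer-valued, $\delta(A) \leq \delta(B_i) - 1$; combining with the free-case equality $\delta(E) = \delta(B_1) + \delta(B_2) - \delta(A)$ furnished by Lemma \ref{submodlemma} gives
\[
    \delta(E) \;\geq\; \max_i \delta(B_i) + 1 \;\geq\; F(m) + 1, \qquad m := \max(|B_1|,|B_2|),
\]
using $B_i \in \CF$. To finish, I show $F(|E|) \leq F(m) + 1$: when $m < n$, so both $|B_i| < n$, this follows from the fourth hypothesis, which gives $E \in \CF$ outright; and when $m \geq n$, I integrate the bound $F'(t) \leq 1/t$ on $[m, |E|]$ and note $|E| - m = \min_i |B_i| - |A| \leq m$, so $F(|E|) - F(m) \leq \ln(|E|/m) \leq \ln 2 < 1$.

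The only delicate step is this final estimate tying the $\leq_d$-forced unit gap $\delta(B_i) - \delta(A) \geq 1$ to the derivative bound $F'(x) \leq 1/x$: this is precisely where all four hypotheses are used in concert, with the fourth handling the small regime and the derivative condition covering the large one.
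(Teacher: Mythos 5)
Your proof is correct and follows the same overall strategy as the paper: reduce, via the free-amalgam decomposition of subsets of $E$, to checking $\delta(E) \geq F(|E|)$, and then bound $\delta(E)$ from below and $F(|E|)$ from above using the derivative condition on $F$. There are two presentational differences worth noting. First, you make the reduction to $X = E$ an explicit induction on $|B_1| + |B_2|$; the paper only remarks that it ``suffices to check $X = E$,'' leaving this recursion implicit. Second, your final estimate is coarser but tidier: you observe $\delta(E) \geq F(m) + 1$ with $m = \max(|B_1|,|B_2|)$ (since the $\leq_d$-gap is an integer $\geq 1$) and then bound $F(|E|) - F(m) \leq \ln(|E|/m) \leq \ln 2 < 1$ using $|E| - m = \min_i|B_i| - |A| \leq m$; the paper instead keeps the gap proportional, deriving $\delta(E) \geq F(|B_1|) + (|B_2| - |A|)/|B_1|$ and comparing against $F(|B_1| + (|B_2| - |A|))$ via $F(x+y) \leq F(x) + y/x$. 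Both estimates close the argument; yours trades a slightly sharper bound for a simpler computation.
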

    \begin{proof}
        Let $A, B_1, B_2 \in \mc{C}_F$, with $A \leq_d B_1, B_2$. We may assume $|B_1| \geq n$ and $|B_1| \geq |B_2|$. Let $E$ be the free amalgam of $B_1, B_2$ over $A$. By Lemma \ref{Cgzfreeamalg}, $E \in \mc{C}_{>0}$ and $B_1, B_2 \leq_d E$. We need to show that $E \in \mc{C}_F$. Assuming $E \neq B_1, B_2$, we have $A \neq B_1, B_2$. Suppose $X \subseteq E$: we need to show that $\delta(X) \geq F(|X|)$. As $X$ is the free amalgam of $B_1 \cap X$, $B_2 \cap X$ over $A \cap X$ and as $A \cap X \leq_d B_i \cap X$, it suffices to check just for $X = E$.
		
        We have that \[\delta(E) = \delta(B_1) + \delta(B_2) - \delta(A) = \delta(B_1) + (|B_2| - |A|)\frac{\delta(B_2) - \delta(A)}{|B_2| - |A|}.\] As $|B_1| \geq |B_2|$ and as $A \leq_d B_1$ with $A \neq B_1$, we have \[\frac{\delta(B_2) - \delta(A)}{|B_2| - |A|} \geq \frac{1}{|B_1|}.\] So \[\delta(E) \geq \delta(B_1) + \frac{|B_2| - |A|}{|B_1|} \geq F(|B_1|) + \frac{|B_2| - |A|}{|B_1|},\] and as the conditions on $F$ ensure that $F(x + y) \leq F(x) + y/x$ for $x \geq n$, we have \[\delta(E) \geq F(|B_1| + |B_2| - |A|) = F(|E|).\]
    \end{proof}

\section{The fixed points on type spaces property (FPT)}
	
    The following is folklore:
    \begin{lem}
        Let $M$ be an $\mc{L}$-structure, and let $G = \Aut(M)$ with the pointwise convergence topology. Then, for each $n \geq 1$, $G$ acts continuously on the Stone space $S_n(M)$ of $n$-types with parameters in $M$, with the action given by \[g \cdot p(\bar{x}) = \{\phi(\bar{x}, g\bar{m}) : \phi(\bar{x}, \bar{m}) \in p(\bar{x})\}.\] That is, $G \curvearrowright S_n(M)$ with the action defined above is a $G$-flow.
    \end{lem}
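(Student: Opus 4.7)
The plan is to verify in sequence that the formula defines a group action, that it lands in $S_n(M)$ (so in particular $g \cdot p$ is a complete consistent type), and finally that the action map $G \times S_n(M) \to S_n(M)$ is continuous. The key observation underlying everything is that the substitution map on $\mc{L}_M$-formulas sending $\phi(\bar{x}, \bar{m}) \mapsto \phi(\bar{x}, g\bar{m})$ is a bijection that preserves the elementary diagram of $M$: indeed $M \models \phi(\bar{a}, \bar{m})$ iff $M \models \phi(g\bar{a}, g\bar{m})$ since $g \in \Aut(M)$. From this, consistency of $g \cdot p$ follows, because a finite fragment $\{\phi_i(\bar{x}, g\bar{m}_i)\}_{i \le k}$ is satisfied in an elementary extension of $M$ exactly when $\{\phi_i(\bar{x}, \bar{m}_i)\}_{i \le k}$ is, and the latter is a fragment of $p$. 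Completeness and the action axioms ($\id \cdot p = p$, $g \cdot (h \cdot p) = (gh) \cdot p$) are then immediate bookkeeping.

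For continuity, I would work with the standard subbasis on $S_n(M)$ of clopens $[\phi(\bar{x}, \bar{m})] = \{q \in S_n(M) : \phi(\bar{x}, \bar{m}) \in q\}$, where $\phi$ ranges over $\mc{L}$-formulas and $\bar{m}$ over tuples from $M$. Unpacking the definition, $(g, p)$ lies in the preimage of $[\phi(\bar{x}, \bar{m})]$ under the action map precisely when $\phi(\bar{x}, g^{-1}\bar{m}) \in p$. Given $(g_0, p_0)$ in this preimage, set $\bar{m}' := g_0^{-1}\bar{m}$, and consider
\[
U := \{g \in G : g^{-1}\bar{m} = \bar{m}'\} \times [\phi(\bar{x}, \bar{m}')].
\]
The first factor is open in $G$ (it is the preimage of the basic open $\{h \in G : h\bar{m} = \bar{m}'\}$ under the continuous map $g \mapsto g^{-1}$, or equivalently a union of cosets of the pointwise stabiliser of $\bar{m}'$), and the second factor is open in $S_n(M)$. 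Any $(g, p) \in U$ satisfies $\phi(\bar{x}, g^{-1}\bar{m}) = \phi(\bar{x}, \bar{m}') \in p$, so $U$ is contained in the preimage; this exhibits the preimage as open.

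The only mildly subtle point is the well-definedness check, since one must know that consistency transfers along the substitution $\bar{m} \mapsto g\bar{m}$; but as noted this is immediate from $g$ being an automorphism (alternatively, realise $p$ by some $\bar{a}$ in a monster model $\mathfrak{C} \succeq M$, extend $g$ to an automorphism of $\mathfrak{C}$, and observe $g\bar{a}$ realises $g \cdot p$). The remainder is routine, and no real obstacle arises.
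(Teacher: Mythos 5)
Your proof is correct, and there is nothing in the paper to compare it against: the paper does not supply its own proof of this lemma but instead refers the reader to \cite[Lemma 4.1]{MS23}, remarking only that ``the proof is relatively straightforward and follows via a compactness argument.'' Your argument is precisely the standard one that description alludes to — the compactness/finite-satisfiability check (using that $g \in \Aut(M)$ preserves truth of $\mc{L}(M)$-sentences, so $M \models \exists \bar{x} \bigwedge_i \phi_i(\bar{x}, \bar{m}_i)$ iff $M \models \exists \bar{x} \bigwedge_i \phi_i(\bar{x}, g\bar{m}_i)$) handles consistency of $g \cdot p$; completeness and the action axioms are bookkeeping; and continuity is verified on the clopen subbasis $[\phi(\bar{x}, \bar{m})]$ by observing that the preimage $\{(g, p) : \phi(\bar{x}, g^{-1}\bar{m}) \in p\}$ contains the open box $(g_0 \cdot \mathrm{Stab}(\bar{m}')) \times [\phi(\bar{x}, \bar{m}')]$ around any of its points. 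The alternative consistency argument via strong homogeneity of a monster model is also sound. No gaps.
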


    See \cite[Lemma 4.1]{MS23}, for a proof. (The proof is relatively straightforward and follows via a compactness argument.)
 
    Note that we define the action of $G$ on $\mc{L}(M)$-formulae as \[g \cdot \phi(\bar{x}, \bar{m}) = \phi(\bar{x}, g\bar{m}).\]		
	
    \begin{defn}[{\cite[Definition 4.2]{MS23}}]
        Let $M$ be an $\mc{L}$-structure and let $G = \Aut(M)$. We say that $M$ has the \emph{fixed points on type spaces property} (FPT) if every subflow of $G \curvearrowright S_n(M)$, $n \geq 1$, has a fixed point.
    \end{defn}
	
    Note that FPT is equivalent to every orbit closure $\ov{G \cdot p(\bar{x})}$ in $S_n(M)$ having a fixed point.

    The below lemma will play a key role in the proof of Theorem \ref{FPTMF}. (See Section \ref{top dyn intro} for the definition of a factor.)
    \begin{lem} \label{FPT passes to factors}
        Let $M$ be an $\mc{L}$-structure and let $G = \Aut(M)$. Suppose that $M$ has FPT. Then every subflow of each factor of $G \curvearrowright S_n(M)$, $n \geq 1$, has a fixed point.
    \end{lem}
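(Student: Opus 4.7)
The plan is a routine topological-dynamics transfer argument: pull each subflow of the factor back to a subflow of $S_n(M)$, apply FPT there, then push the fixed point forward along the factor map.

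Concretely, let $\alpha : S_n(M) \to Y$ be a factor (i.e.\ a surjective $G$-flow morphism) and let $Z \subseteq Y$ be a subflow. First I would verify that $\alpha^{-1}(Z)$ is a subflow of $S_n(M)$: it is closed because $Z$ is closed and $\alpha$ is continuous, hence compact as a closed subset of the compact space $S_n(M)$; it is nonempty because $\alpha$ is surjective and $Z$ is nonempty; and it is $G$-invariant because, for $p \in \alpha^{-1}(Z)$ and $g \in G$, the equivariance of $\alpha$ gives $\alpha(g \cdot p) = g \cdot \alpha(p) \in g \cdot Z \subseteq Z$, using that $Z$ is $G$-invariant.

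Now I would apply the hypothesis that $M$ has FPT to the subflow $\alpha^{-1}(Z) \subseteq S_n(M)$, obtaining a $G$-fixed point $p \in \alpha^{-1}(Z)$. Then $\alpha(p) \in Z$, and by equivariance, $g \cdot \alpha(p) = \alpha(g \cdot p) = \alpha(p)$ for every $g \in G$, so $\alpha(p)$ is a $G$-fixed point of $Z$.

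There is no real obstacle here; the only thing worth double-checking is that the definitions of subflow (nonempty, compact, $G$-invariant) and factor (continuous, surjective, $G$-equivariant) are being used exactly as laid out in Section \ref{top dyn intro}. The argument applies uniformly for every $n \geq 1$ since FPT is stated for all such $n$.
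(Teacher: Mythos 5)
Your proof is correct and is exactly the routine pull-back/push-forward argument that the paper has in mind when it says ``The proof is straightforward'' (the paper omits the details). Nothing to add.
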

    The proof is straightforward.
 
\section{An \texorpdfstring{$\omega$}{omega}-categorical structure such that no \texorpdfstring{$\omega$}{omega}-categorical expansion has FPT} \label{resultsec}

    We will now discuss the main result of this paper, which is new.
    
    \begin{thm} \label{FPTMF}
        There is an $\omega$-categorical structure $M$ such that no $\omega$-categorical expansion has FPT, the fixed points on type spaces property.
    \end{thm}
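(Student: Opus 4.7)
The plan follows the strategy outlined in the introduction. First I would fix a control function $F$ satisfying the hypotheses of Lemma \ref{CFfreeamalg}, calibrated so that the \Fr limit $M := M_F$ is $\omega$-categorical, $2$-sparse, and has every vertex of infinite degree (the infinite-degree condition can be arranged by ensuring that, for each $B \in \CF$ and each $v \in B$, the structure obtained by adding a new vertex joined only to $v$ still lies in $\CF$, i.e.\ survives the predimension bound). Then Proposition \ref{oriffsparse} makes $\Or(M)$ nonempty and Proposition \ref{MFfixorinforbs} applies to $M$.

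Next I would construct an orientation-encoding map. I would choose an $\mc{L}$-formula $\phi(x, y, z)$, asymmetric in $y, z$, such that for every $1$-type $p(x) \in S_1(M)$ and every edge $\{m_1, m_2\} \in E_M$, at most one of $\phi(x, m_1, m_2)$, $\phi(x, m_2, m_1)$ lies in $p$. A natural candidate is a formula describing the isomorphism type of $\cl^d(\{x, m_1, m_2\})$ in which, for example, $x \sim m_1$ and $x \not\sim m_2$ (so that $m_1, m_2$ play distinguishable roles). I would then define $u : S_1(M) \to 2^{M^2}$ by
\[u(p) := \{(m_1, m_2) \in M^2 : \phi(x, m_1, m_2) \in p\}.\]
Continuity of $u$ is immediate, since the membership of each pair in $u(p)$ is a basic clopen condition on $p$; equivariance follows because $\phi$ is an $\mc{L}$-formula. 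A $1$-type $p$ would be called \emph{orientation-encoding} if $u(p) \in \Or(M)$, and the key verification is that the set $X \subseteq S_1(M)$ of such types is nonempty, closed, and $\Aut(M)$-invariant, hence a subflow. Invariance and closedness are formal; nonemptiness requires showing that any prescribed $2$-orientation of $M$ can be realized by extending $M$ with a single new vertex realizing the correct adjacency pattern prescribed by $\phi$, and then applying compactness to finite pieces of an orientation. This uses the amalgamation property of $(\CF, \leq_d)$ together with Proposition \ref{oriffsparse}.

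For the final step, suppose $M'$ is an $\omega$-categorical expansion of $M$, and set $H := \Aut(M') \leq \Aut(M)$. The restriction map $r : S_1(M') \twoheadrightarrow S_1(M)$ is a surjective, continuous, $H$-equivariant map between $H$-flows, so $S_1(M)$ is an $H$-factor of $S_1(M')$. If $M'$ had FPT, Lemma \ref{FPT passes to factors} would apply to give an $H$-fixed point $p_0$ in the $H$-subflow $X$; then $u(p_0) \in \Or(M)$ would be an $H$-fixed $2$-orientation of $M$. Proposition \ref{MFfixorinforbs} would then force $H$ to have infinitely many orbits on $M^2$, contradicting Ryll-Nardzewski applied to the supposedly $\omega$-categorical $M'$.

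The main obstacle is the construction of $\phi$ and the nonemptiness of $X$: one must pin down a local encoding rule which, applied edgewise across all of $M$, consistently produces a valid $2$-orientation rather than leaving edges undirected or assigning two directions to one edge, \emph{and} is simultaneously realizable by a single $1$-type. This forces a delicate interplay between the choice of $F$ (which controls what extensions of $M$ by a new vertex are allowed) and the choice of $\phi$ (which must distinguish the two endpoints of each edge in a $d$-closure-respecting way). Once $u$ and $X$ are in hand, the topological-dynamics argument transferring the fixed orientation back through the factor map is routine.
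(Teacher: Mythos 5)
Your high-level scheme matches the paper's proof (which is laid out in the introduction): build an $\omega$-categorical $M_F$ with all vertices of infinite degree, define a continuous equivariant ``decoding'' map $u: S_1(M) \to 2^{M^2}$ from a formula, show $\Or(M)$ meets $\Img(u)$, and then push an FPT-fixed type through the restriction factor $S_1(M') \twoheadrightarrow S_1(M)$ and $u$ to obtain an $H$-fixed $2$-orientation, contradicting Proposition \ref{MFfixorinforbs} via Ryll-Nardzewski. You correctly identify the bottleneck as the construction of the encoding formula and the realisability of an orientation by a $1$-type. However, the concrete candidate you propose does not work, and that bottleneck is precisely where all the real work of the paper sits.

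The candidate formula ``$x \sim m_1 \wedge x \not\sim m_2$'' (or any formula that specifies the adjacency relation between $x$ and the endpoints themselves) is inconsistent for generic orientations. Consider a directed path $a \to b \to c$ in $\tau$: encoding $(a,b)$ demands $x \not\sim b$, while encoding $(b,c)$ demands $x \sim b$, a direct contradiction, so no single $1$-type can carry such a $\tau$. More generally, any scheme in which the orientation of $\{a,b\}$ is read off from adjacency between $x$ and $\{a,b\}$ cannot be globally consistent, because a vertex $b$ typically appears both as a head of one edge and a tail of another. The paper's solution circumvents this by making the encoding gadget disjoint from $\{a,b\}$ except for the ``head'' vertex $x$: the \emph{label formula} $f(x,a,b)$ asserts the existence of a $4$-cycle $l_1 l_2 l_3 l_4$ with $x \sim l_1$, $l_2, l_4 \sim a$ and $l_3 \sim b$; the asymmetry (two edges into $a$, one into $b$) records the direction, and the label structures attached to distinct oriented edges intersect only in $x$, so no adjacency clash can arise. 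Crucially, this gadget must be realisable inside $M_F$: one needs $4$-cycles, paths, and the joined-up ``star of label structures'' $B$ to lie in $\CF$ with $A \leq_d B$, which forces the extra numerical constraints on $F$ in Lemma \ref{specificF} (e.g.\ $F(4), F(5), F(6) < 4$ and $F'(x) \leq 2/(8x+1)$). One also has to prove that no \emph{spurious} label structures appear in $M$: the paper does this by a predimension computation showing the $l_i$'s of any label structure over $a, b \in A$ are forced into $\cl^d_M(\{a,b,c\}) \subseteq B$. None of these steps are routine, and they constitute the bulk of the argument; without them your plan does not go through.

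A smaller point: what you want as a subflow is not ``$X = \{p : u(p) \in \Or(M)\}$'' verified abstractly for nonemptiness, but rather $\Img(u) \cap \Or(M)$ living inside the factor $\Img(u)$ of $S_1(M')$; the paper proves the stronger fact that \emph{every} orientation is in $\Img(u)$, but nonemptiness of the intersection would suffice. Your topological-dynamics transfer step is otherwise correct and agrees with the paper.
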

	
    The $\omega$-categorical structure $M$ in the above theorem will be a particular case of the $2$-sparse graph $M_F$, the $\omega$-categorical Hrushovski construction from Section \ref{omegacatsparsesec}.
	
    The proof will depend on the following key result from \cite{EHN19}:
    
    \begin{prop}[{\cite[Theorem 3.7]{EHN19}}] \label{MFfixorinforbs}
        Let $M$ be an infinite $2$-sparse graph in which all vertices have infinite degree. Let $G = \Aut(M)$, and let $H \leq G$.
		
        Consider the $G$-flow $G \curvearrowright \Or(M)$. If $H$ fixes a $2$-orientation of $M$, then $H$ has infinitely many orbits on $M^2$.
    \end{prop}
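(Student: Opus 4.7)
The plan is to argue by contradiction: suppose $H$ has only finitely many orbits on $M^2$. The key observation is that, since $H$ fixes the $2$-orientation $\rho$, the directed-graph distance $d^{\rho}(x,y) \in \N \cup \{\infty\}$---defined as the length of the shortest $\rho$-directed path from $x$ to $y$, with value $\infty$ if none exists---is an $H$-invariant function on $M^2$. By the contradiction hypothesis, $d^{\rho}$ can therefore take only finitely many distinct values on $M^2$. Since $M$ has edges (as every vertex has infinite degree), finite values are actually realised, so there is some $N \in \N$ that is a uniform upper bound on all finite values of $d^{\rho}$.

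It follows that for every $v \in M$, the forward-reachable set $\{u \in M : d^{\rho}(v,u) < \infty\}$ is contained in the forward ball of radius $N$ about $v$. Because $\dplus(u) \le 2$ for every $u$, this ball has size at most $1 + 2 + 4 + \cdots + 2^{N} = 2^{N+1} - 1$. Hence there is a uniform bound $C := 2^{N+1} - 1$ on the size of the forward-reachable set from any vertex of $M$.

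To finish, I would use the infinite-in-degree hypothesis to build a long reverse-path and contradict this bound. Starting from any $v_0 \in M$, I iteratively choose $v_i \in \Nminus(v_{i-1}) \setminus \{v_0, \ldots, v_{i-1}\}$, which is possible since $\Nminus(v_{i-1})$ is infinite. The resulting directed path $v_i \to v_{i-1} \to \cdots \to v_0$ shows that $\{v_0, v_1, \ldots, v_i\}$ lies in the forward-reachable set from $v_i$, so the size of that set is at least $i + 1$. Taking $i > C$ contradicts the uniform bound. The substantive step is really identifying $d^{\rho}$ as the relevant $H$-invariant on $M^2$; after that, the out-degree bound turns the finiteness of values into a uniform bound on forward-reach, and the infinite in-degree produces arbitrarily long reverse-paths that force this bound to fail.
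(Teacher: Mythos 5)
Your proof is correct. The paper itself cites \cite[Theorem 3.7]{EHN19} without reproducing the argument, but your route — treating the directed distance $d^{\rho}$ as the key $H$-invariant on $M^2$ (so that finitely many orbits would force a uniform finite bound $N$ on reachable distances), using $\dplus \leq 2$ to cap the forward-reachable set from any vertex by $2^{N+1}-1$, and then exploiting infinite in-degree (infinite since degree is infinite and $\dplus \leq 2$) to build arbitrarily long, vertex-disjoint reverse $\rho$-paths that blow up some forward-reachable set — is precisely the mechanism of that proof.
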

	
    Before giving the details of the proof of Theorem \ref{FPTMF}, we first give an informal general outline. 

    The informal overview of the proof is as follows. Let $G = \Aut(M)$. For each orientation $\tau \in \Or(M)$, we will define a notion of when a $1$-type $p(x) \in S_1(M)$ \emph{encodes} $\tau$ (see Figure \ref{N_taupic}), and we will have an associated ``decoding" $G$-morphism $u : S_1(M) \to 2^{M^2}$ sending each orientation-encoding $1$-type back to the orientation it encodes. We will show that each orientation has a $1$-type encoding it, and thus $u$ contains $\Or(M)$ in its image. Now, let $M'$ be an expansion of $M$ with FPT, and let $H$ denote its automorphism group. There is an $H$-factor map $w: S_1(M') \to S_1(M)$ given by the restriction map, and so composing with $u$ we see that $\Or(M)$ is an $H$-subflow of a factor of $S_1(M')$. Thus, by Lemma \ref{FPT passes to factors}, as $M'$ has FPT, $H$ fixes an orientation of $M$. Therefore, by Proposition \ref{MFfixorinforbs}, $H$ will have infinitely many orbits on $M^2$, and thus by the Ryll-Nardzewski theorem $M'$ cannot be $\omega$-categorical. 

    \medskip
  
    \textbf{We now start the formal details of the proof of Theorem \ref{FPTMF}, which proceeds in three parts.}

    \subsection*{Part 1: specify the control function.}
        
    We begin with a description of the control function $F$ and properties of the class $\mc{C}_F$ used to produce the structure $M$ for Theorem \ref{FPTMF}. It will become clearer in later steps why we take control functions satisfying the below conditions.
 
    \begin{lem} \label{specificF}
        Let $F$ be a control function for the class $\mc{C}_F$ satisfying the conditions of Definition \ref{CFdef}, and additionally assume the following conditions:
        \begin{itemize}
		\item $F$ is piecewise smooth, and its right derivative $F'(x)$ is decreasing;
		\item $F(1) = 2$, $F(2) = 3$; \item $F'(x) \leq \frac{2}{8x+1}$ for $x \geq 2$, where $F'$ denotes the right derivative. 
	\end{itemize}
		
        Then:
        \begin{enumerate}
            \item $\CF$ contains a point and an edge, and points and edges are $d$-closed in elements of $\CF$;
            \item $(\mc{C}_F, \leq_d)$ is a free amalgamation class;
		\item each vertex of $M_F$ has infinite degree (where $M_F$ is the \Fr limit of $(\mc{C}_F, \leq_d)$);
            \item if $a_0 a_1 \cdots a_{n-1}$ is a path, then $a_0 a_1 \cdots a_{n-1} \in \mc{C}_F$;
            \item $F(4) < 4$, $F(5) < 4, F(6) < 4$;
            \item if $abcd$ is a $4$-cycle, then $abcd \in \mc{C}_F$.
        \end{enumerate}
    \end{lem}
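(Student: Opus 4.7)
The plan is to verify each of the six claims by direct predimension bookkeeping, making repeated use of one basic estimate on $F$ obtained by integrating $F'(x) \leq 2/(8x+1)$. Explicitly, for $n \geq 2$ we get
\[
F(n) \;\leq\; F(2) + \int_2^n \tfrac{2}{8t+1}\,dt \;=\; 3 + \tfrac{1}{4}\log\!\bigl(\tfrac{8n+1}{17}\bigr),
\]
so $F$ grows at most logarithmically; any graph whose predimension on subsets grows at least linearly in size will thus automatically lie in $\CF$. This inequality is the one workhorse I will invoke throughout.

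I would dispatch (1), (4), (5), (6) by direct calculation. For (1): $\delta(\text{point}) = 2 = F(1)$ and $\delta(\text{edge}) = 3 = F(2)$ handle membership; for $d$-closedness of a point (respectively edge) $A$ in $B \in \CF$, any $A \subsetneq C \sub B$ has $\delta(C) \geq F(|C|)$ and $|C| > |A|$, so strict monotonicity of $F$ together with the normalisations gives $\delta(C) > \delta(A)$, with the one tight $|C|=2$ case ruled out by an explicit edge-count. For (4): any subset of a path on $k$ vertices is a union of $c \geq 1$ subpaths with $k-c$ edges, hence has $\delta = k + c \geq k+1 \geq F(k)$, using $F(1)=2, F(2)=3$ and the logarithmic bound for $k \geq 3$. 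For (5): the uniform estimate $F'(x) \leq 2/17$ on $[2,6]$ gives $F(6) \leq 3 + 4\cdot(2/17) < 4$, and $F(4), F(5)$ likewise. For (6): the $4$-cycle has $\delta = 4$, and each of its proper subsets is either empty, a vertex, an edge or non-edge pair, or a three-vertex path, each of which I will verify against the relevant $F$-value (using $F(3) < 4$ from (5)-style estimates).

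For (2), I plan to invoke Lemma \ref{CFfreeamalg} with the threshold $n = 2$. The hypotheses on piecewise smoothness and the monotonicity of $F'$ are assumed; the required bound $F'(x) \leq 1/x$ for $x > 2$ follows from $2/(8x+1) \leq 1/x$; and the small-case free-amalgamation condition, where $|B_1|, |B_2| < 2$, reduces to amalgamating (sub)singletons, which lands in $\CF$ by (1). For (3), given $v \in M_F$ and $k \in \N$, I would form the star $S_k$ on $k+1$ vertices with centre $v$. A short predimension calculation shows $\delta(S_k) = k+2$, and any subset has $\delta$ growing linearly in its size (either it misses $v$, giving $2 \cdot |B|$, or contains $v$, giving $|B|+1$), so the logarithmic bound places $S_k$ in $\CF$. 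A direct check gives $\{v\} \leq_d S_k$, and $\{v\} \leq_d M_F$ follows from (1) via Definition \ref{infleqd}; the $\leq_d$-extension property of $M_F$ then embeds $S_k$ over $v$ into $M_F$, producing $k$ neighbours of $v$.

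The main obstacle I anticipate is purely bookkeeping, with two small pinch points. First, the normalisations $F(1)=2$ and $F(2)=3$ are attained with equality by points and edges, so the strict-inequality arguments for $d$-closedness in (1) need the step from $|A|$ to $|C| \geq |A|+1$ to be handled carefully, rather than via a uniform $F(n) < \delta_{\min}(n)$ bound. Second, the application of Lemma \ref{CFfreeamalg} hinges on the threshold $n$ being chosen so that both the derivative bound $F'(x) \leq 1/x$ and the small-amalgamation hypothesis hold simultaneously; $n = 2$ is the natural choice but I want to double-check that the derivative inequality indeed holds for all $x > 2$ and not merely $x \geq 2$, so I may need to invoke decreasingness of $F'$ at the boundary.
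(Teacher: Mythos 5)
Your proposal is correct and follows essentially the same strategy as the paper's proof: verify the $\leq_d$-closure of points and edges from the normalisations, invoke Lemma \ref{CFfreeamalg} with threshold $n=2$ for free amalgamation, build stars and paths to obtain the membership claims, and bound $F(6)$ by integrating $F'$. The only tactical differences are minor: for parts (3) and (4) the paper builds the star $S_k$ and the path $a_0\cdots a_{n-1}$ by iterated free amalgamation (exploiting part (2), so no further $\CF$-membership check is needed), whereas you compute predimensions of all subsets directly and compare against the logarithmic bound; and for part (5) you use the crude uniform estimate $F'(x)\leq 2/17$ on $[2,6]$ rather than the paper's exact integral $\frac{1}{4}(\log 49 - \log 17)$ --- both give $F(6)<4$. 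Your worries about a ``tight $|C|=2$ case'' in (1) and about the boundary between $x>2$ and $x\geq 2$ in (2) are unnecessary: for a point $\delta(C)\geq F(2)=3>2$ is not tight, and $F'(x)\leq 2/(8x+1)<1/x$ already holds for all $x\geq 2$, which certainly covers $x>2$.
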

    \begin{proof} \hfill
	\begin{enumerate}
            \item As $F(1) = 2$, if $x$ is a point then $\delta(\{x\}) = 2 = F(|\{x\}|)$, so $\{x\} \in \CF$. If $ab$ is an edge, then $\delta(ab) = 3 = F(2)$, so $ab \in \CF$. As $F$ is strictly increasing, points and edges are $d$-closed in elements of $\CF$.
            \item Take $A, B_1, B_2 \in \mc{C}_F$ with $A \leq_d B_1, B_2$. Then as $F'(x) \leq \frac{2}{8x+1} < 1/x$ for $x \geq 2$, by Lemma \ref{CFfreeamalg} we need only check the case $|B_1|, |B_2| \leq 1$, and the only non-trivial case is where $A = \varnothing$: if $b_1, b_2$ are non-adjacent points then $\delta(\{b_1, b_2\}) = 4 > F(2)$. So $(\mc{C}_F, \leq_d)$ is a free amalgamation class.
            \item Let $k \geq 1$. Let $ax \in \CF$ be an edge. The point $a$ is $d$-closed in $ax$, and so by taking the free amalgamation of $k$ copies $ax_1, \cdots, ax_k$ of $ax$ over $a$, we have that the star graph $S_k$ is in $\CF$ (where $S_k$ is the complete bipartite graph $K_{1, k}$). Using the $\leq_d$-extension property of $M_F$, this implies that each vertex of $M_F$ has infinite degree.
            \item Proceed by induction, and obtain $a_0 \cdots a_{n-1} \in \mc{C}_F$ by the free amalgamation of $a_0 \cdots a_{n-2}$ and $a_{n-2} a_{n-1}$ over $a_{n-2}$.
            \item $F$ is strictly increasing, and so it suffices to show $F(6) < 4$. $F(6) \leq F(2) + \int_2^6 \frac{2}{8x + 1} \d x = 3 + \frac{1}{4} \log(49) - \frac{1}{4} \log(17) < 4$.
            \item Let $abcd \sub M_F$ be a $4$-cycle. Then $\delta(abcd) = 4 > F(4)$. For $C \subsetneq abcd$, $C$ either consists of a path of length 2, an edge, two non-adjacent points or a single point. All of these lie in $\mc{C}_F$.
        \end{enumerate}
    \end{proof}
	
    Throughout the rest of the proof of Theorem \ref{FPTMF} in Section \ref{resultsec}, we will assume $F$ is a control function satisfying the conditions of Lemma \ref{specificF}, and we write $M = M_F$. The first three conditions of the above lemma are relatively standard; the fourth condition is the one that is particularly specific to our example, constituting a mild additional restriction on $F$.
	
    Note that control functions satisfying the conditions of the above lemma do indeed exist: for example, take $F$ piecewise linear with $F(0) = 0$, $F(1) = 2$, $F(2) = 3$, and then for $x \geq 2$ define $F(x) = \frac{1}{4} \log(8x + 1) + 3 - \frac{1}{4} \log(17)$.

    \subsection*{Part 2: types encoding orientations, the encoding lemma, and its use in proving the main theorem.}

    Given an orientation $\tau \in \Or(M)$, we will define a particular notion of when a $1$-type \emph{encodes} $\tau$.

    We write $\mc{L}$ for the language of graphs (so $M$ is an $\mc{L}$-structure).

    \begin{defn}
        For $a, b \in M$, we define the \emph{label formula} $f(x, a, b)$ in the language $\mc{L}(M)$ (with constants from $M$ and free variable $x$) to be: 
	\begin{align*}
            &f(x, a, b) \equiv\, (x \neq a \wedge x \neq b \wedge a \neq b \wedge a \sim b) \, \wedge \\
            &(\ex l_1, l_2, l_3, l_4) ((\bigwedge_{i < j} l_i \neq l_j) \wedge (\bigwedge_i l_i \neq x \wedge l_i \neq a \wedge l_i \neq b) \,\wedge \\
            &(x \sim l_1 \wedge l_1 \sim l_2 \wedge l_2 \sim l_3 \wedge l_3 \sim l_4 \wedge l_4 \sim l_1 \wedge l_2 \sim a \wedge l_4 \sim a \wedge l_3 \sim b)). 
        \end{align*} 
        (See Figure \ref{N_taupic}.)
        
        \medskip
        
        Let $\tau \in \Or(M)$, and let $p(x) \in S_1(M)$. We say that $p(x)$ \emph{encodes} $\tau$ if $p(x)$ contains the following set of formulae:
        \[\{f(x, a, b) : (a, b) \in \tau\} \cup \{\neg f(x, a, b) : (a, b) \in M^2 \setminus \tau\}.\]

        Informally, $p(x)$ encodes $\tau$ if, for every pair $(a, b) \in M^2$, we have that $(a, b) \in \tau$ iff $a, b$ are adjacent and $(a, b)$ has a ``label structure" $L_{(a, b)} = \{x, l_1^{(a, b)}, l_2^{(a, b)}, l_3^{(a, b)}, l_4^{(a, b)}\}$ attached to it, where all label structures intersect exactly in the ``head vertex" $x$. See Figure \ref{N_taupic} for an example of this (where $p(x)$ has been realised as a point $c$).
	
        We define the \emph{decoding map} $u : S_1(M) \to 2^{{M}^2}$ by \[u(p(x)) = \{(a, b) \in {M}^2 : f(x, a, b) \in p(x)\}.\] (Note that we will often use subset notation when formally we in fact mean the characteristic function of that subset within ${M}^2$.)

        It is immediate that if $p(x)$ encodes $\tau$, then $u(p(x)) = \tau$.
    \end{defn}

    \begin{figure} 
        \centering
        \def\svgscale{0.6}
        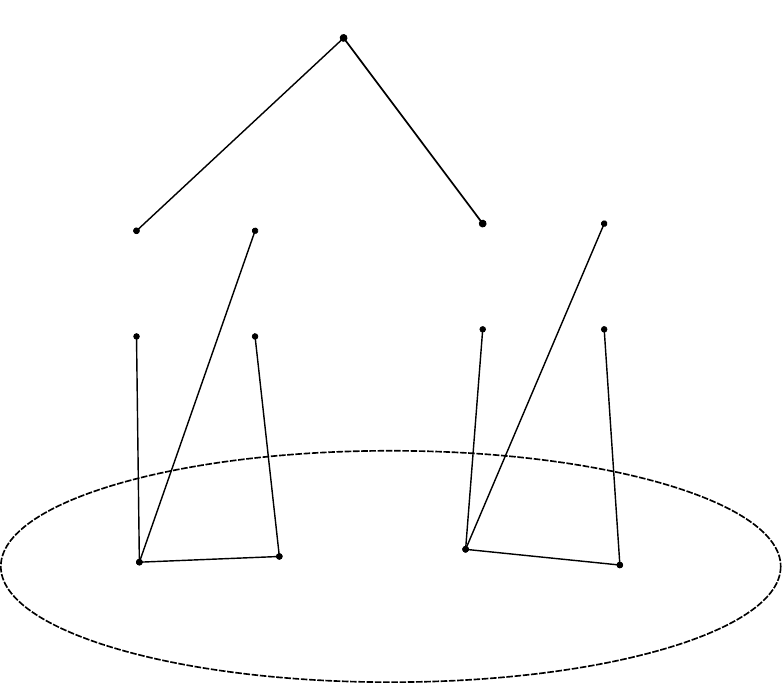
        \caption{} \label{N_taupic}
    \end{figure}

    The proof of the following is straightforward.
    \begin{lem}
	The map $u$ is a $G$-flow morphism.
    \end{lem}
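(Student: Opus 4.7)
The plan is to verify directly the two defining properties of a $G$-flow morphism: continuity and $G$-equivariance. Since the target space $2^{M^2}$ carries the product topology, and $S_1(M)$ carries the Stone topology with basic clopen sets indexed by $\mc{L}(M)$-formulas, both checks are essentially definitional.

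For continuity, a subbasis of clopen sets in $2^{M^2}$ consists of sets of the form $U_{(a,b)}^+ = \{\sigma \in 2^{M^2} : \sigma(a,b) = 1\}$ and their complements $U_{(a,b)}^-$, as $(a,b)$ ranges over $M^2$. I would compute directly that
\[u^{-1}(U_{(a,b)}^+) = \{p(x) \in S_1(M) : f(x, a, b) \in p(x)\},\]
which is a basic clopen subset of $S_1(M)$ since $f(x, a, b)$ is a single $\mc{L}(M)$-formula. The complement is handled analogously, so $u$ is continuous.

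For $G$-equivariance, I would unwind both sides of the desired identity $u(g \cdot p(x)) = g \cdot u(p(x))$. Using the definition $g \cdot p(x) = \{\phi(x, g\bar{m}) : \phi(x, \bar{m}) \in p(x)\}$, the formula $f(x, a, b)$ lies in $g \cdot p(x)$ precisely when $f(x, g^{-1}a, g^{-1}b) \in p(x)$ (noting that $g$ acts as an automorphism of $M$, so $f(x, g^{-1}a, g^{-1}b)$ is the unique formula of the form $f(x, \bar{m})$ which becomes $f(x, a, b)$ under the shift by $g$). Hence
\[u(g \cdot p(x)) = \{(a,b) \in M^2 : f(x, g^{-1}a, g^{-1}b) \in p(x)\}.\]
On the other hand, the $G$-action on $2^{M^2}$ factoring through the $G$-action on $M^2$ gives
\[g \cdot u(p(x)) = \{(ga, gb) : f(x, a, b) \in p(x)\},\]
and relabeling $(a', b') = (ga, gb)$ shows this equals the expression above. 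So $u$ is $G$-equivariant.

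Neither step should present a real obstacle; the only mild subtlety is keeping the direction of the action on formulas consistent (the constants get shifted by $g$, which forces the inverse to appear when reading off which formula lies in the shifted type). This is presumably why the paper labels the proof as straightforward.
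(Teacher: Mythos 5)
Your proof is correct and follows essentially the same route as the paper's (omitted) proof: check $G$-equivariance by tracking the shift of constants and check continuity via subbasic clopen sets of $2^{M^2}$, pulling back to the clopen sets of $S_1(M)$ determined by $f(x,a,b)$ and $\neg f(x,a,b)$. The note about the inverse appearing when reading off membership in the shifted type is exactly the subtlety the paper's computation handles.
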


    Now we turn to the key result used in the proof of Theorem \ref{FPTMF}, which we call the \emph{encoding lemma}:

    \begin{lem} \label{can encode ors}
        For each orientation $\tau \in \Or(M)$, there exists a type $p(x) \in S_1(M)$ encoding $\tau$. Thus $\Or(M)$ is a subflow of the image of the decoding map $u$.
    \end{lem}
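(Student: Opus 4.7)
The plan is to establish finite satisfiability of the encoding formula set and then invoke compactness. For each finite $T_+ \subseteq \tau$ and $T_- \subseteq M^2 \setminus \tau$, I would produce a $c \in M$ realising $f(c, a, b)$ for $(a, b) \in T_+$ and $\neg f(c, a, b)$ for $(a, b) \in T_-$. Let $A_0$ be the finite set of vertices occurring in $T_+ \cup T_-$, and $A := \cl^d_M(A_0)$, which is finite by the uniform bound on $d$-closures in $\CF$.

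Then I would construct $c$ via a free amalgamation. Adjoin a new vertex $c$ to $A$ with no edges to $A$, and for each $(a, b) \in T_+$ adjoin four fresh distinct vertices $l_1^{(a,b)}, \ldots, l_4^{(a,b)}$ with exactly the edges required by $f(x, a, b)$: the 4-cycle on the labels, together with $c \sim l_1^{(a,b)}$, $a \sim l_2^{(a,b)}$, $a \sim l_4^{(a,b)}$, and $b \sim l_3^{(a,b)}$. Call the result $B$. A direct predimension computation shows that each label structure contributes relative predimension $0$ over $A \cup \{c\}$ (four new vertices, eight new edges), so $\delta(B/A) = 2$; and a short case analysis on how any proper $A \subsetneq X \subseteq B$ meets each label structure (using that each label has gadget-degree $3$) gives $\delta(X) > \delta(A)$, hence $A \leq_d B$. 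For $B \in \CF$, the worst-case inequality $\delta(A) + 2 \geq F(|A| + 1 + 4|T_+|)$ follows from the $2$-sparsity bound $|T_+| \leq |E_A| \leq 2|A|$ together with the condition $F'(x) \leq 2/(8x+1)$ from Lemma~\ref{specificF}, which yields $F(9|A| + 1) - F(|A|) \leq \frac{1}{4} \log 9 < 2$. The $\leq_d$-extension property of $M$ then provides a $\leq_d$-embedding $B \hookrightarrow M$ fixing $A$, with image $\tilde B \leq_d M$ and $\tilde c$ the image of $c$; the embedded label structures witness $f(\tilde c, a, b)$ for every $(a, b) \in T_+$.

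The crux, and the main obstacle, is the negative half: for each $(a, b) \in T_-$ with $a \sim b$ (the case $a \not\sim b$ being automatic), I must show that no label structure for $(a, b)$ attached to $\tilde c$ exists anywhere in $M$. Suppose for contradiction that $l_1, \ldots, l_4 \in M$ form such a structure. If all $l_i \in \tilde B$, then $l_1 \sim \tilde c$ in $\tilde B$ forces $l_1 = l_1^{(a', b')}$ for some $(a', b') \in T_+$ (since the only neighbours of $\tilde c$ in $\tilde B$ are the $l_1^{(a',b')}$'s), and tracing through the 4-cycle together with the asymmetric attachments (two edges to $a$, one to $b$) forces $(a, b) = (a', b') \in T_+$, contradicting $(a, b) \in T_-$. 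Otherwise $Y := \tilde B \cup \{l_1, \ldots, l_4\} \supsetneq \tilde B$, so $\tilde B \leq_d M$ demands $\delta(Y/\tilde B) > 0$. Letting $k \in \{1, 2, 3, 4\}$ be the number of $l_i$ outside $\tilde B$, each such vertex has gadget-degree $3$ and any $k$ of the $l_i$'s span at most $k$ edges of the 4-cycle (a case check for $k = 1, 2, 3, 4$), so the new edges number at least $3k - k = 2k$, giving $\delta(Y/\tilde B) \leq 2k - 2k = 0$, a contradiction. Hence $\neg f(\tilde c, a, b)$ holds. Compactness then produces a type $p(x) \in S_1(M)$ encoding $\tau$; since $u(p(x)) = \tau$ by construction, $\Or(M) \subseteq \mathrm{Im}(u)$, so it is a subflow of $\mathrm{Im}(u)$.
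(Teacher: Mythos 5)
Your overall route matches the paper's: construct the auxiliary graph $B$ over a finite $d$-closed $A$ with a head vertex $c$ and one label gadget per positively-labelled pair, verify $A \leq_d B$ and $B \in \CF$, invoke the $\leq_d$-extension property to embed $B$ strongly in $M$, and then argue the positive formulas are witnessed by construction while the negative ones hold because any stray label gadget would violate $\tilde B \leq_d M$. Your treatment of the negative half is a clean repackaging of the paper's: where the paper tabulates $\delta(X/Y)$ to show $\{l_1, \dots, l_4\} \sub \cl^d_M(\{a,b,c\}) \sub B$, you directly count gadget-degrees to show $\delta(Y/\tilde B) \le 0$ when some $l_i$ lies outside $\tilde B$; these are essentially the same computation, and your in-$\tilde B$ case analysis (tracing through the asymmetric attachments to force $(a,b) = (a',b')$) is a step the paper leaves implicit, so spelling it out is fine.

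The genuine gap is in the verification that $B \in \CF$. Membership in $\CF$ requires $\delta(C) \geq F(|C|)$ for \emph{every} $C \subseteq B$, and you only establish the inequality for $C = B$, asserting without justification that this is ``the worst case.'' It is not obvious, and the paper does not treat it as such: Lemma~\ref{B in CF} is devoted entirely to this point, first using free amalgamation (together with Lemma~\ref{smallstrsCF}'s audit of the small configurations) to reduce to subsets $C$ of a restricted shape, and then crucially using that $\tau|_{C \cap A}$ is a $2$-orientation to bound the number of label gadgets anchored in $C$ by $2|C \cap A|$ --- \emph{not} by $2|A|$. Your bound $|T_+| \leq |E_A| \leq 2|A|$ is the $C = B$ instance of this, but for a proper subset $C$ with $C \cap A$ small relative to the number of gadgets it touches, the estimate must be recast in terms of $|C \cap A|$. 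The numerical manipulation you give ($F(9|A|+1) - F(|A|) \leq \tfrac14\log 9$, using $F'(x) \leq 2/(8x+1)$) is the right kind of calculation, but it needs to be run over the general subset reduction, as the paper does. As written, the proposal does not establish $B \in \CF$, and hence cannot yet invoke the extension property. A secondary, smaller point of the same flavour: the claim that ``a short case analysis'' gives $A \leq_d B$ compresses the paper's Lemma~\ref{A_istrongB_i} (a fourteen-row table plus a free-amalgamation reduction), so if you are writing this up you should reproduce that reduction rather than assert it.
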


    Before proving Lemma \ref{can encode ors}, whose proof involves a significant amount of technical work, we show how to use it to prove Theorem \ref{FPTMF}.

    \begin{proof}[Proof of Theorem \ref{FPTMF} given Lemma \ref{can encode ors}]
        Let $M'$ be an expansion of $M$ with FPT. Let $H = \Aut(M)$.
		
        We have a surjective $H$-flow morphism $w : S_1(M') \to S_1(M)$ given by restriction, i.e.\ \[w(p(x)) = \{\varphi(x) \in p(x) : \varphi(x)\text{ is a formula in the language }\mc{L}(M)\}.\]
		
        We have that $u : S_1(M) \to 2^{{M}^2}$ is a $G$-flow morphism, and $\Or(M)$ is a $G$-subflow of $2^{{M}^2}$ contained in the image of $u$. By considering $u$ as an $H$-flow morphism to its image, we have that $u \circ w$ is an $H$-factor of $S_1(M')$ with $\Or(M)$ as a subflow of its image. As $M'$ has FPT, by Lemma \ref{FPT passes to factors} we see that $H$ fixes an orientation on $M$. By Theorem \ref{MFfixorinforbs}, $H$ has infinitely many orbits on $M^2$, and so is not oligomorphic. Therefore $M'$ is not $\omega$-categorical, by the Ryll-Nardzewski theorem.
    \end{proof}

    \subsection*{Part 3: the proof of the encoding lemma.}

    We now prove Lemma \ref{can encode ors}. This forms the bulk of the technical work in this paper.

    Let $\tau \in \Or(M)$. To show that there exists a type encoding $\tau$, it suffices to show that the set of formulae \[\Lambda(x) = \{f(x, a, b) : (a, b) \in \tau\} \cup \{\neg f(x, a, b) : (a, b) \in M^2 \setminus \tau\}\] is finitely satisfiable in $M$ itself: this implies via compactness that there exists a type $p(x) \in S_1(M)$ containing this set of formulae.

    Again, before beginning the proof of Lemma \ref{can encode ors} we provide a brief informal overview. In order to show the finite satisfiability of $\Lambda(x)$, we will take a finite $d$-closed substructure $A$ of $M$ and show that the set $\Lambda_A(x)$ is satisfiable in $M$, where $\Lambda_A(x)$ consists of the formulae in $\Lambda(x)$ with parameters only from $A$. We will construct a graph $B$ with head vertex $c$ of label structures over $A$ (as in Figure \ref{N_taupic}) such that $A \leq_d B$ and $B \in \CF$. Therefore in fact we may assume $A \leq_d B \leq_d M$, using the $\leq_d$-extension property. We will then show that the only label structures in $M$ over any pair of elements of $A$ can be found in $B$, using the fact that $B$ is $d$-closed in $M$, and thus we have that $M \models \Lambda_A(c)$. 

    We now begin the formal proof. Let $A \leq_d M$ be finite. We define a graph $B$ as follows.
    
    \begin{itemize}
        \item $B$ includes $A$ as a substructure;
        \item add a new vertex $c$ to $B$, with $c \notin A$;
         \item for $(a, b) \in \tau|_A$ (i.e.\ the edge $ab$ is oriented from $a$ to $b$ in the orientation $\tau$ and $ab$ is an edge of $A$), add to $B$ four new vertices $l_1^{(a, b)}, l_2^{(a, b)}, l_3^{(a, b)}, l_4^{(a, b)}$ and new edges \[cl_1^{(a, b)},  l_1^{(a, b)} l_2^{(a, b)}, l_2^{(a, b)} l_3^{(a, b)}, l_3^{(a, b)} l_4^{(a, b)}, l_4^{(a, b)} l_1^{(a, b)}\] and add two edges $l_2^{(a, b)}a, l_4^{(a, b)}a$ (to the ``start vertex" $a$) and one edge $l_3^{(a, b)}b$ (to the ``end vertex" $b$).
    \end{itemize}
 
    For $(a, b) \in \tau|_A$, let $L_{(a, b)} = \{c, l_1^{(a, b)}, l_2^{(a, b)}, l_3^{(a, b)}, l_4^{(a, b)}\}$. Informally, each $(a, b) \in \tau|_A$ has its orientation labelled by $L_{(a, b)}$. 
    
    We have that \[B = A \,\cup \bigcup_{(a, b) \in \tau|_A} L_{(a, b)}\] and the $L_{(a, b)}$ intersect only in $c$. We will show that $A \leq_d B$ and $B \in \CF$.
    
    It is recommended that the reader consult Figure \ref{N_taupic} during the technical lemmas in this part of the proof.
 
    \begin{lem} \label{A_istrongB_i}
        We have $A \leq_d B$.
    \end{lem}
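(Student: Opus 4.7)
The plan is to prove $A \leq_d B$ by a modular analysis that exploits the structure of $B$ over $A$. The key structural observation is that $B \setminus A$ decomposes as a disjoint union $\{c\} \sqcup \bigsqcup_{(a,b) \in \tau|_A} (L_{(a,b)} \setminus \{c\})$, where distinct pieces $L_{(a,b)} \setminus \{c\}$ have no edges between them, each such piece has edges to $A$ only into $\{a, b\}$, and the head vertex $c$ is adjacent only to the vertices $\{l_1^{(a,b)} : (a,b) \in \tau|_A\}$.

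Fix $A \subsetneq C \subseteq B$ and set $Y := C \setminus A$, $Y_c := Y \cap \{c\}$, and $Y_{(a,b)} := Y \cap (L_{(a,b)} \setminus \{c\})$. By the structural observation above, the edges of $C$ not lying in $E_A$ split cleanly as internal edges of each $Y_{(a,b)}$, edges from each $Y_{(a,b)}$ to $\{a, b\}$, and one edge $c l_1^{(a,b)}$ for each $(a,b)$ with $c \in Y_c$ and $l_1^{(a,b)} \in Y_{(a,b)}$. This yields
\[
\delta(C) - \delta(A) \;=\; 2|Y_c| \,+\, \sum_{(a,b)} \bigl(2|Y_{(a,b)}| - e(Y_{(a,b)})\bigr) \,-\, |Y_c| \cdot \#\{(a,b) : l_1^{(a,b)} \in Y_{(a,b)}\},
\]
where $e(Y_{(a,b)})$ counts internal edges of $Y_{(a,b)}$ together with its edges to $\{a, b\}$.

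It then suffices to check that for every nonempty $Y_{(a,b)} \subseteq \{l_1^{(a,b)}, l_2^{(a,b)}, l_3^{(a,b)}, l_4^{(a,b)}\}$ one has $2|Y_{(a,b)}| - e(Y_{(a,b)}) \geq 1$, the worst case being the full set $\{l_1, l_2, l_3, l_4\}$ for which $2 \cdot 4 - 7 = 1$. Combining: if $c \notin Y$ then at least one $Y_{(a,b)}$ is nonempty and contributes at least $1$, while if $c \in Y$ the head contributes $+2$ and each nonempty $Y_{(a,b)}$ contributes at least $0$ even after deducting the possible $c l_1^{(a,b)}$ edge. Either way $\delta(C) > \delta(A)$, so $A \leq_d B$.

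There is no conceptual obstacle here: once the modular decomposition is set up the entire argument reduces to the finite case check on the fifteen nonempty subsets of $\{l_1, l_2, l_3, l_4\}$, which is pure bookkeeping. The label structure $L_{(a,b)}$ is engineered precisely so that every sub-configuration has nonnegative contribution, with $c$ providing the $+2$ surplus that guarantees strict positivity whenever $c$ is included.
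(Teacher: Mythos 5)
Your proof is correct and takes essentially the same approach as the paper: exploit the fact that the label structures $L_{(a,b)}$ intersect pairwise only in $c$ and have edges to $A$ only into $\{a,b\}$, then reduce to a finite check on subsets of a single label gadget. The paper organizes this via an explicit table for the single-edge case followed by a chain of free-amalgamation reductions to handle general $A$ (separating the cases $c \notin C$, $c \in C$ with no $l_1$'s present, and $c \in C$ with some $l_1$ present). You instead write $\delta(C/A)$ directly as an additive sum over the pieces $Y_c$ and $Y_{(a,b)}$, observe that each nonempty $Y_{(a,b)}$ contributes at least $1$ (worst case the full $4$-cycle, contributing exactly $1$), and that the head $c$ contributes $+2$ which absorbs the $-1$ deductions from the $c\,l_1^{(a,b)}$ edges. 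The numerical content of the two arguments is identical, but your additive formulation is a bit cleaner than the paper's casework and makes the role of the $+2$ surplus from $c$ more transparent. One minor presentational point: you should spell out that all edges from $B \setminus A$ into $A$ land in $\{a,b\}$ precisely because $(a,b) \in \tau|_A$ forces $a, b \in A$; this is what justifies treating $e(Y_{(a,b)})$ as counting only edges to $\{a,b\}$ rather than to arbitrary vertices of $A$.
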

    \begin{proof}
        For $A \subsetneq C \sub B$, we need to show $\delta(C) > \delta(A)$.
		
        First consider the case where $A$ consists of a single edge $ab$, with $(a, b) \in \tau$ (recall that we chose the control function $F$ so that edges are always $d$-closed). Then, suppressing subscripts for notational convenience, we have $B = \{c, l_1, l_2, l_3, l_4, a, b\}$. We calculate the relative predimension of some $A \subsetneq C \sub B$ in the table below.
		
	\begin{center} \small
		\begin{tabular} {|c|c|}
			\hline
			$C \setminus A$ & $\delta(C/A)$ \\
			\hline
			$l_2$ & 1 \\
			$l_3$ & 1 \\
			$l_4$ & 1 \\
			$l_1, l_2$ & 2 \\
			$l_1, l_4$ & 2 \\
			$l_2, l_3$ & 1 \\
			$l_3, l_4$ & 1 \\
			$c, l_1$ & 3 \\
			$l_1, l_2, l_3$ & 2 \\
			$l_1, l_2, l_4$ & 2 \\
			$l_1, l_3, l_4$ & 2 \\
			$l_2, l_3, l_4$ & 1 \\
			$l_1, l_2, l_3, l_4$ & 1 \\
			$c, l_1, l_2, l_3, l_4$ & 2 \\
			\hline
		\end{tabular}
	\end{center}
		
        The remaining cases result from free amalgamations over $A$, and so also have positive predimension (as if two graphs $X, Y$ are freely amalgamated over $Z$, then $\delta((X \cup Y) / Z) = \delta(X/Z) + \delta(Y/Z)$). The remaining cases are where $C \setminus A$ is equal to $\{l_1\}, \{c\}, \{l_1, l_3\}, \{l_2, l_4\}$ or $\{c\} \cup X$, where $X \sub \{l_2, l_3, l_4\}$.
		
        Now consider the general case of finite $A \leq_d M$. Given $A \subsetneq C \sub B$, the vertices of $C$ consist of $A$ together with subsets $J_{(a, b)}$ of $L_{(a, b)}$ for each $(a, b) \in \tau|_A$. For $(a, b) \in \tau|_A$, let $J'_{(a, b)} = J^{}_{(a, b)} \cup A$. 
		
        If $c \notin C$, then the $J'_{(a, b)}$ are freely amalgamated over $A$, and so from the single-edge case we see that $\delta(C / A) > 0$. 
		
        We now consider the case where $c \in C$. If $l_{1}^{(a, b)} \notin J^{\alignsub}_{(a, b)}$ for all $(a, b) \in \tau|_A$, then $C$ consists of a vertex $c$ with no neighbours together with a free amalgamation over $A$ of each of the $J'_{(a, b)} \setminus \{c\}$, for $(a, b) \in \tau|_A$. So, from the single-edge case and the fact that $\delta(\{c\}) = 2$, we have that $\delta(C / A) > 0$.
		
        If $c \in C$ and there exists $(a', b') \in \tau|_A$ with $l_{1}^{(a', b')} \in J^{}_{(a', b')}$, then $C$ is the free amalgam over $A$ of each of the $J'_{(a, b)} \setminus \{c\}$ for which $l_{1}^{(a, b)} \notin J^{\alignsub}_{(a, b)}$ (where $(a, b) \in \tau|_A$), together with \[\bigcup \{J'_{(a, b)} : (a, b) \in \tau|_A,\, l_{1}^{(a, b)} \in J^{}_{(a, b)}\}.\] Therefore we need only consider the case where $l_{1}^{(a, b)} \in J^{}_{(a, b)}$ for all $(a, b) \in \tau|_A$. The single-edge calculation shows that $\delta(J_{(a, b)} \setminus \{c\} / A) \geq 1$ for each $J_{(a, b)}$, and these $J'_{(a, b)} \setminus \{c\}$ are freely amalgamated over $A$. Each addition of an edge $cl_1^{(a, b)}$ reduces the predimension by one, but the single addition of the vertex $c$ adds two to the predimension, so in total $\delta(C/A) > 0$.
    \end{proof}
	
    \begin{lem} \label{smallstrsCF}
        For $(a, b) \in \tau|_A$, the substructures of $B$ given by $\{a, b, l_1^{(a, b)}, l_2^{(a, b)}, l_3^{(a, b)}, l_4^{(a, b)}\}$ and $L_{(a, b)}$ lie in $\CF$.
    \end{lem}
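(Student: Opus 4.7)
The plan is a finite, direct verification of $\delta(X) \ge F(|X|)$ for every non-empty $X$ contained in each of the two candidate structures, from which membership in $\mc{C}_F$ will follow. I would begin by computing the overall predimensions. The structure $L_{(a,b)}$ consists of the $4$-cycle $l_1 l_2 l_3 l_4$ together with the pendant edge $c l_1$, so it has $5$ vertices and $5$ edges, giving $\delta(L_{(a,b)}) = 5$. The larger substructure $\{a, b, l_1^{(a,b)}, l_2^{(a,b)}, l_3^{(a,b)}, l_4^{(a,b)}\}$ has $6$ vertices and $8$ edges (the $4$-cycle $l_1 l_2 l_3 l_4$, the edge $ab$, and the attaching edges $l_2 a$, $l_4 a$, $l_3 b$), so its predimension is $4$.

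The key structural observation is that both graphs are bipartite: the $6$-vertex graph splits as $\{a, l_1, l_3\} \sqcup \{b, l_2, l_4\}$ (one checks that every listed edge crosses the partition), and $L_{(a,b)}$ similarly splits as $\{c, l_2, l_4\} \sqcup \{l_1, l_3\}$. Thus every subset of either structure is triangle-free, so by Mantel's theorem any such subset on $n$ vertices has at most $\lfloor n^2 / 4 \rfloor$ edges, giving $\delta(X) \ge 2n - \lfloor n^2/4 \rfloor$. A routine size-by-size check then yields: $\delta(X) = 2 = F(1)$ when $n = 1$; $\delta(X) \ge 3 = F(2)$ when $n = 2$; and $\delta(X) \ge 4$ when $3 \le n \le 5$. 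For $n = 6$ only the full $6$-vertex structure occurs, with $\delta = 4$. Combining with $F(4), F(5), F(6) < 4$ from Lemma \ref{specificF}(5) and the monotonicity of $F$, we obtain $\delta(X) \ge F(|X|)$ throughout. Since $F(n) > 0$ for $n \ge 1$, both structures sit automatically in $\mc{C}_{>0}$, and hence in $\mc{C}_F$.

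There is no serious obstacle: the real work is packaged into Lemma \ref{specificF}, where the control function $F$ was designed precisely so that $F$ drops below $4$ on $\{4, 5, 6\}$. Everything else is a finite bookkeeping check that exploits bipartiteness to uniformly bound edge counts on small subsets.
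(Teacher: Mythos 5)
Your proof is correct but takes a genuinely different route from the paper's. The paper verifies $\delta(C) \geq F(|C|)$ by case enumeration: it identifies several subsets of $\{a, b, l_1, l_2, l_3, l_4\}$ as $4$-cycles or free amalgams of structures already known to lie in $\mc{C}_F$ (which handles all of their own subsets automatically), and then directly computes the predimension for the three remaining subsets. You instead exploit the observation that both candidate graphs are bipartite, hence triangle-free, so Mantel's theorem gives the uniform lower bound $\delta(X) \geq 2n - \lfloor n^2/4\rfloor$ for every $n$-vertex subset $X$. This matches $F$ exactly at $n = 1, 2$ and dominates $F$ for $3 \leq n \leq 5$ (using $F(3) < F(4) < 4$ from monotonicity); at $n = 6$ (which only arises for the full $6$-vertex structure, and where the Mantel bound of $3$ would \emph{not} suffice, since $F(6)$ is only known to be $< 4$) you compute $\delta = 4 > F(6)$ directly, which is essential and you correctly flag it. Your route is more structural and avoids the explicit tables at the cost of invoking one extra combinatorial fact; the paper's route is more self-contained, reusing the free-amalgamation machinery it already has. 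One cosmetic imprecision: at $n = 1$ you write $\delta(X) = 2$, but the argument only delivers $\delta(X) \geq 2$, which is all that is needed.
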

    \begin{proof}
        We write $l_1, l_2, l_3, l_4$, suppressing superscripts.
		
        To show that $\{a, b, l_1, l_2, l_3, l_4\} \in \mc{C}_F$, we consider each subset $C \sub \{a, b, l_1, l_2, l_3, l_4\}$ and show that $\delta(C) \geq F(|C|)$. To speed up the process of checking each subset $C$, in the below table we show that certain subsets $C \sub \{a, b, l_1, l_2, l_3, l_4\}$ lie in $\mc{C}_F$, and therefore every $C' \sub C$ must satisfy $\delta(C') \geq F(|C'|)$.
	\begin{center} \small
		\begin{tabular} {|c|c|}
			\hline
			$C$ & Proof that $C \in \mc{C}_F$ \\
			\hline
			$l_1 l_2 l_3 l_4, l_2 l_3 ab, l_3 l_4 ab, l_1 l_2 l_4 a, l_2 l_3 l_4 a$ & $C$ is a $4$-cycle\\
			$l_1 l_2 l_3 ab$ & free amalgam of $l_2 l_3 ab$, $l_1 l_2$ over $l_2$ \\
			$l_1 l_3 l_4 ab$ & free amalgam of $l_3 l_4 ab$, $l_1 l_4$ over $l_4$ \\
			$l_1 l_2 l_4 ab$ & free amalgam of $l_1 l_2 l_4 a$, $ab$ over $a$ \\
			$l_1 l_2 l_3 l_4 b$ & free amalgam of $l_1 l_2 l_3 l_4$, $l_3 b$ over $l_3$ \\
			\hline
		\end{tabular}
	\end{center}
        We now check the remaining subsets $C \sub \{a, b, l_1, l_2, l_3, l_4\}$ by directly calculating the predimension:
	\begin{center} \small
		\begin{tabular} {|c|c|c|}
			\hline
			$C$ & $\delta(C)$ & $F(|C|)$ \\
			\hline
			$l_2 l_3 l_4 ab$ & $4$ & $F(5) < 4$ \\
			$l_1 l_2 l_3 l_4 a$ & $4$ & $F(5) < 4$ \\
			$l_1 l_2 l_3 l_4 ab$ & $4$ & $F(6) < 4$ \\
			\hline
		\end{tabular}
	\end{center}
		
        We have now shown that $\{a, b, l_1, l_2, l_3, l_4\} \in \mc{C}_F$. For the second part of the lemma, we obtain $L_{(a, b)} \in \mc{C}_F$ via the free amalgam of $L_{(a, b)}$ and $cl_1$ over $l_1$ (recalling that we have defined our control function $F$ so that points are always $d$-closed). 
    \end{proof}
    \begin{lem} \label{B in CF}
	We have that $B \in \CF$.
    \end{lem}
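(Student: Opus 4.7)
My plan is to establish $B \in \CF$ by first showing that two auxiliary substructures of $B$ lie in $\CF$ via iterated free amalgamation, and then handling an arbitrary $C \subseteq B$ by a case split.

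\textbf{Step 1: $B \setminus \{c\} \in \CF$ by iterated free amalgamation.} Enumerate $\tau|_A = \{(a_i, b_i)\}_{i=1}^n$ and set $B_0 = A$, $B_i = B_{i-1} \cup L_i^*$. I would argue inductively that $B_i$ is the free amalgamation of $B_{i-1}$ with $L_i^{**}$ over $\{a_i, b_i\}$. The condition $\{a_i, b_i\} \leq_d L_i^{**}$ is immediate from the single-edge table in the proof of Lemma \ref{A_istrongB_i}, and $\{a_i, b_i\} \leq_d B_{i-1}$ follows by transitivity from $\{a_i, b_i\} \leq_d A$ (edges are $d$-closed, Lemma \ref{specificF}(1)) and $A \leq_d B_{i-1}$ (inductively, using Lemma \ref{CFfreeamalg}). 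Hence $B \setminus \{c\} = B_n \in \CF$.

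\textbf{Step 2: $B_c := \{c\} \cup \bigcup_i L_i^* \in \CF$.} This is likewise an iterated free amalgamation: start with the star graph $S_n \in \CF$ (Lemma \ref{specificF}(3)) on $\{c, l_1^{(a_1, b_1)}, \ldots, l_1^{(a_n, b_n)}\}$ and attach each $4$-cycle $L_i^* \in \CF$ (Lemma \ref{specificF}(6)) over the single vertex $l_1^{(a_i, b_i)}$, which is $d$-closed in both sides since points are $d$-closed (Lemma \ref{specificF}(1)).

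\textbf{Step 3: the general $C \subseteq B$.} If $c \notin C$ then $C \subseteq B \setminus \{c\} \in \CF$; if $c \in C$ and $C \cap A = \varnothing$ then $C \subseteq B_c \in \CF$; in both situations we are done. Otherwise $c \in C$ and $C_A := C \cap A \neq \varnothing$: decompose $C = C_A \cup \{c\} \cup \bigcup_i C_L^i$ with $C_L^i = C \cap L_i^*$ and compute $\delta(C) = \delta(C_A) + 2 + \sum_i \gamma_i$. A case analysis mirroring Lemma \ref{A_istrongB_i} shows that $\gamma_i \geq 0$ always, with $\gamma_i = 0$ only when $C_L^i = L_i^*$ and $\{a_i, b_i\} \subseteq C_A$; call such giraffes \emph{fully closed} and let $T$ denote their index set.

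\textbf{The main obstacle} is that a fully-closed giraffe adds $4$ vertices to $|C|$ but $0$ to $\sum_i \gamma_i$. The resolution uses that each $i \in T$ requires the edge $(a_i, b_i)$ of $A$ to lie inside $C_A$, so $|T|$ is bounded by the number of edges of $A$ within $C_A$; since $C_A \subseteq A \in \CF$ is $2$-sparse, $|T| \leq 2|C_A|$. Combining this with $\gamma_i \geq |C_L^i|/4$ for non-closed giraffes, $\delta(C_A) \geq F(|C_A|)$, and the estimate $F(|C|) - F(|C_A|) \leq 2(|C| - |C_A|)/(8|C_A| + 1)$ obtained by integrating $F'(x) \leq 2/(8x+1)$ for $x \geq 2$, a short arithmetic rearrangement reduces $\delta(C) \geq F(|C|)$ to the inequality $64|C_A| + s_1(8|C_A| - 7) \geq 32|T|$, where $s_1 = \sum_{i \notin T} |C_L^i|$; since $8|C_A| - 7 \geq 9 > 0$ for $|C_A| \geq 2$ and $|T| \leq 2|C_A|$, this holds. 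The remaining case $|C_A| = 1$ has $|T| = 0$ automatically and is dispatched by direct inspection of the small possible configurations of $C$.
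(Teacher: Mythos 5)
The proposal is correct and follows essentially the same approach as the paper: decompose $C$ into pieces glued over $C \cap A$ (and over $c$), dispose of the $c \notin C$ and $C \cap A = \varnothing$ cases by free amalgamation, and close the remaining case by an arithmetic estimate combining $F'(x) \leq 2/(8x+1)$ with a $2$-sparsity bound on the number of problematic label structures. Your bookkeeping differs mildly from the paper's — you package $B \setminus \{c\}$ and $B_c$ as standalone members of $\CF$, and bound only the fully-closed giraffes via $|T| \leq |E_{C_A}| < 2|C_A|$ while absorbing the rest into $\gamma_i \geq |C^i_L|/4$, whereas the paper instead bounds by $2|C\cap A|$ the number of label structures whose start vertex lies in $C \cap A$ using the $2$-orientation — but these give equivalent mileage and lead to essentially the same final inequality.
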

    \begin{proof}
        We have to show that $\delta(C) \geq F(|C|)$ for $C \sub B$. The vertices of $C$ consist of $C \cap A$ together with subsets $J_{(a, b)}$ of $L_{(a, b)}$ for each $(a, b) \in \tau|_A$ (some of these $J_{(a, b)}$ may be empty). For $(a, b) \in \tau|_A$, let $J'_{(a, b)} = J^{}_{(a, b)} \cup (C \cap A)$.
		
        First we consider the case where $c \notin C$. $C$ is then the free amalgam of the $J'_{(a, b)}$ (where $(a, b) \in \tau|_A$) over $C \cap A$. Given that $\mc{C}_F$ is a free amalgamation class and $C \cap A \leq_d C$, it therefore suffices to show that $J'_{(a, b)} \in \mc{C}_F$ for $(a, b) \in \tau|_A$. Fix $(a, b) \in \tau|_A$. To show that $J'_{(a, b)} \in \mc{C}_F$, as $J'_{(a, b)}$ is a free amalgam of $J_{(a, b)} \cup (\{a, b\} \cap C)$ and $C \cap A \in \mc{C}_F$ over $\{a, b\} \cap C \in \mc{C}_F$, it suffices to show that $J_{(a, b)} \cup (\{a, b\} \cap C)$ lies in $\mc{C}_F$, and we have already checked this in Lemma \ref{smallstrsCF}.
		
        Now we consider the case where $c \in C$. If $l_{1}^{(a, b)} \notin J^{}_{(a, b)}$ for each $(a, b) \in \tau|_A$, then $C$ consists of a vertex $c$ with no neighbours together with the free amalgam over $C \cap A$ of each $J'_{(a, b)} \setminus \{c\}$, and so we are done by the first case in the previous paragraph. Otherwise, $C$ is the free amalgam over $C \cap A$ of \[\bigcup \{J'_{(a, b)} : l_1^{(a, b)} \in J^{}_{(a, b)}, (a, b) \in \tau|_A\}\] with each $J'_{(a, b)} \setminus \{c\}$ for which $l_{1}^{(a, b)} \notin J^{}_{(a, b)}$, and so using the first case considered above we may reduce to the case where each non-empty $J_{(a, b)}$ contains $l_1^{(a, b)}$.
		
        Similarly, we may exclude the case where $C$ contains sets $J_{(a, b)}$ with $J_{(a, b)} = \{c, l_1^{(a, b)}, l_3^{(a, b)}\}$, as $C$ is the free amalgam over $C \cap A$ of 
		
        \begin{align*}
            &\bigcup \left\{J'_{(a, b)} : (a, b) \in \tau|_A,\, J_{(a, b)} \neq \{c, l_1^{(a, b)}, l_3^{(a, b)}\}\right\} \;\cup \\
            &\bigcup \left\{\{c, l_1^{(a, b)}\} \cup (C \cap A) : J_{(a, b)} = \{c, l_1^{(a, b)}, l_3^{(a, b)}\}\right\}
        \end{align*}
		
        with each $\{l_3^{(a, b)}\} \cup (C \cap A)$ (which lies in $\mc{C}_F$, using Lemma \ref{smallstrsCF}) for which $J_{(a, b)} = \{c, l_1^{(a, b)}, l_3^{(a, b)}\}$. We may likewise freely amalgamate over $c$ to exclude the cases where $C$ contains sets $J_{(a, b)}$ for which $J_{(a, b)} = \{c, l_1^{(a, b)}\}$, or for which $J_{(a, b)}$ is any subset of $L_{(a, b)}$ but we have $a, b \notin C \cap A$.
		
        So, the case remaining is where $C$ consists of $C \cap A$ together with sets $J_{(a, b)}$ containing $c, l_1^{(a, b)}$ and at least one of $l_2^{(a, b)}, l_4^{(a, b)}$, where each $J_{(a, b)}$ has some edge to $C \cap A$. We need to show that $\delta(C) \geq F(|C|)$. 
		
        We now calculate the relative predimension over $A \cup \{c\}$ of each remaining possible $J_{(a, b)} \cup X$, $X \sub \{a, b\}$, in the following table, where we label each structure as $Y_i$, $1 \leq i \leq 11$:
		
	\begin{center} \small
		\begin{tabular} {|c|c|c|}
			\hline
			$J_{(a, b)} \cup X$ & Label & $\delta(J_{(a, b)} \cup X / A \cup \{c\})$ \\
			\hline
			$c l_1 l_2 a$ & $Y_1$ & 1 \\
			$c l_1 l_4 a$ & $Y_2$ & 1 \\
			$c l_1 l_2 l_3 a$ & $Y_3$ & 2 \\
			$c l_1 l_2 l_3 b$ & $Y_4$ & 2 \\
			$c l_1 l_2 l_3 ab$ & $Y_5$ & 1 \\
			$c l_1 l_3 l_4 a$ & $Y_6$ & 2 \\
			$c l_1 l_3 l_4 b$ & $Y_7$ & 2 \\
			$c l_1 l_3 l_4 ab$ & $Y_8$ & 1 \\
			$c l_1 l_2 l_3 l_4 a$ & $Y_9$ & 1 \\
			$c l_1 l_2 l_3 l_4 b$ & $Y_{10}$ & 2 \\
			$c l_1 l_2 l_3 l_4 ab$ & $Y_{11}$ & 0 \\
			\hline
		\end{tabular}
	\end{center}
		
        We write $k_i$ for how many times $Y_i$ occurs in $C$. We also write $\delta_i = \delta(Y_i / A \cup \{c\})$. Let $\lambda_i = |\{l_1, l_2, l_3, l_4\} \cap Y_i|$.
		
        Then, recalling that the vertex $c$ also adds 2 to the predimension, we have that \[ \delta(C) = \sum_{1 \leq i \leq 11} \delta_i k_i + 2 + \delta(C \cap A). \] Now, 
        \begin{align*}
		F(|C|) &= F(1 + |C \cap A| + \sum_{1 \leq i \leq 11} \lambda_i k_i) \\ &\leq F(1 + |C \cap A| + 4\sum_{1 \leq i \leq 11} k_i)\\ &=
		F(1 + |C \cap A| + 4(k_4 + k_7 + k_{10}) + 4\sum_{i \leq 11,\, i \notin \{4,7,10\}} k_i).
        \end{align*}
		
        As $\tau|_{C \cap A}$ is a 2-orientation, we have that each $a \in C \cap A$ can have at most two label structures with $a$ as the starting vertex (i.e.\ with edges to $a$ from $l_2, l_4$), and so \[\sum_{i \leq 11,\, i \notin \{4,7,10\}} k_i \leq 2|C \cap A|.\] So $F(|C|) \leq F(8|C \cap A| + 4(k_4 + k_7 + k_{10}) + 1 + |C \cap A|)$.
		
        As $F(u + v) \leq F(u) + vF'(u)$ and $F'(x) \leq \frac{2}{8x + 1}$ for $x \geq 2$, we have that if $|C \cap A| \geq 2$, then 
        \begin{align*}
		F(|C|) &\leq F(|C \cap A|) + \frac{2}{8|C \cap A| + 1} (8|C \cap A| + 4(k_4 + k_7 + k_{10}) + 1) \\ &< F(|C \cap A|) + 2 + k_4 + k_7 + k_{10} \\ &\leq \delta(C).
        \end{align*} 
		
        If $|C \cap A| = 1$, then 
	\begin{align*}
		F(|C|) &\leq F(1 + |C \cap A|) + (8|C \cap A| + 4(k_4 + k_7 + k_{10}))F'(1 + |C \cap A|) \\ &= 3 + \frac{2}{8 \cdot 2 + 1} (8|C \cap A| + 4(k_4 + k_7 + k_{10})) \\ &< 4 + \frac{8}{17}(k_4 + k_7 + k_{10}) \\ &\leq \delta(C)
	\end{align*}
	(as $\delta(C \cap A) = 2$).
    \end{proof}
        
    \begin{lem} \label{fortauthereexistsptau}
        Let $\tau \in \Or(M)$. Then the set of formulae \[\Lambda(x) = \{f(x, a, b) : (a, b) \in \tau\} \cup \{\neg f(x, a, b) : (a, b) \in M^2 \setminus \tau\}\] is finitely satisfiable in $M$.
    \end{lem}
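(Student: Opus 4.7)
By compactness, it suffices to show that every finite $\Lambda_0 \sub \Lambda(x)$ is satisfied in $M$. Fix such a $\Lambda_0$, let $S \fin M$ be the (finite) set of parameters occurring in $\Lambda_0$, and set $A = \cl^d_M(S)$, which is finite by the uniform boundedness of $d$-closure in $M_F$ and satisfies $A \leq_d M$. It is enough to produce $c \in M$ with the property that $M \models f(c,a,b)$ if and only if $(a,b) \in \tau|_A$, for every $(a,b) \in A^2$, since any such $c$ realizes $\Lambda_0$.

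I use the finite graph $B$ built in the paragraphs preceding this lemma, in which $A$ is extended by a fresh head vertex $c$ together with one label structure $L_{(a,b)}$ for each oriented edge $(a,b) \in \tau|_A$. Lemmas \ref{A_istrongB_i} and \ref{B in CF} supply $A \leq_d B$ and $B \in \CF$, so the $\leq_d$-extension property of $M_F$ lets me realize $B$ as a $\leq_d$-substructure of $M$ extending the inclusion $A \hookrightarrow M$. I abuse notation and write $c$ also for the image of the head vertex in $M$.

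The positive direction is immediate: for each $(a,b) \in \tau|_A$ the vertices $l_1^{(a,b)}, \ldots, l_4^{(a,b)} \in B \sub M$ directly witness $M \models f(c,a,b)$. For the negative direction, fix $(a,b) \in A^2 \setminus \tau|_A$ and suppose for contradiction that some $L = \{l_1, l_2, l_3, l_4\} \sub M$ witnesses $f(c,a,b)$; the first conjunct of $f$ forces $a \sim b$, so $(b,a) \in \tau|_A$. If $L \sub B$, the neighbor structure of $B$ pins $L$ down step by step: $l_1 \sim c$ forces $l_1 = l_1^{(a', b')}$ for some $(a', b') \in \tau|_A$, the conditions $l_2, l_4 \sim l_1$ combined with $l_2, l_4 \sim a$ force $\{l_2, l_4\} = \{l_2^{(a, b')}, l_4^{(a, b')}\}$ and in particular $a' = a$, and finally $l_3 \sim l_2, l_4$ together with $l_3 \sim b$ yields $l_3 = l_3^{(a, b)}$ with $b' = b$, contradicting $(a, b) \notin \tau|_A$. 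If $L \not\sub B$, set $k = |L \setminus B| \geq 1$ and compute $\delta(B \cup L) - \delta(B) = 2k - e$, where $e$ counts edges of $M$ restricted to $B \cup L$ having at least one endpoint in $L \setminus B$; a short case analysis on which subset of $\{l_1, l_2, l_3, l_4\}$ lies outside $B$ will give $e \geq 2k$, and hence $\delta(B \cup L) \leq \delta(B)$, contradicting $B \leq_d M$.

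The main obstacle is the second subcase above. It reduces to the combinatorial observation that, for every nonempty $T \sub \{1,2,3,4\}$, at least $2|T|$ of the eight label-structure edges $l_1 l_2, l_2 l_3, l_3 l_4, l_4 l_1, l_1 c, l_2 a, l_3 b, l_4 a$ are incident to $\{l_i : i \in T\}$. This is easily verified by inspecting the four cases $|T| = 1, 2, 3, 4$; the tightest case is $|T| = 4$, where the count is exactly $2|T| = 8$, so the inequality $\delta(B \cup L) \leq \delta(B)$ still contradicts the strict version of $\leq_d$-closure required by $B \leq_d M$.
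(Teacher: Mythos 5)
Your proof is correct and follows the paper's approach: build the graph $B$, use Lemmas \ref{A_istrongB_i} and \ref{B in CF} together with the $\leq_d$-extension property to realize $B \leq_d M$, take the positive direction for free, and rule out spurious label structures via a predimension argument. The paper anchors the predimension count on $\cl^d_M(\{a,b,c\})$ (showing all four witnesses $l_i$ must lie there and hence in $B$) and leaves the ``only the intended label structure exists inside $B$'' step implicit, whereas your two-case split on $L \sub B$ versus $L \not\sub B$ makes that step explicit; the underlying combinatorics — that adjoining any nonempty subset of $\{l_1,\dots,l_4\}$ never strictly increases predimension — are the same in both.
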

    \begin{proof}
        Let $A \leq_d M$ be finite, and let
        \[
            \Phi_A(x) = \{f(x, a, b) : (a, b) \in \tau|_A\},
            \Psi_A(x) = \{\neg f(x, a, b) : (a, b) \in A^2 \setminus \tau\}.
        \]
        Let $\Lambda_A(x) = \Phi_A(x) \cup \Psi_A(x)$. We will show that $\Lambda_A(x)$ is satisfiable in $M$. 
        
        Let $B \supseteq A$ be as constructed previously, with distinguished head vertex $c$. As $A \leq_d B$ (Lemma \ref{A_istrongB_i}) and $B \in \CF$ (Lemma \ref{B in CF}), we may use the $\leq_d$-extension property of $M$ to assume that $A \leq_d B \leq_d M$.

        It is immediate from the construction of $B$ that $B \models \Phi_A(c)$ and hence $M \models \Phi_A(c)$, as for each $(a, b) \in \tau|_A$, there is a label structure $L_{(a, b)}$ attached.

        We now show that $M \models \Psi_A(c)$. It suffices to show that for $(a, b) \in A^2$, if $M \models f(c, a, b)$ then the $l_i$, $1 \leq i \leq 4$, that $f(c, a, b)$ specifies must lie in $\cl^d_{M}(\{a, b, c\})$ and therefore in $B$, as $B \leq_d M$. We show that $\{l_1, l_2, l_3, l_4\} \sub \cl^d_M(\{a, b, c\})$ in the table below.
		
	\begin{center} \small
		\begin{tabular} {|c|c|}
			\hline
			$X/Y$ & $\delta(X/Y)$ \\
			\hline
			$l_1, l_2, l_3, l_4 / a, b, c$ & 0 \\
			$l_1, l_2, l_3 / l_4, a, b, c$ & $-1$ \\
			$l_1, l_2, l_4 / l_3, a, b, c$ & $-1$ \\
			$l_1, l_3, l_4 / l_2, a, b, c$ & $-1$ \\
			$l_2, l_3, l_4 / l_1, a, b, c$ & $-1$ \\
			$l_1, l_2 / l_3, l_4, a, b, c$ & $-1$ \\
			$l_1, l_3 / l_2, l_4, a, b, c$ & $-2$ \\
			$l_1, l_4 / l_2, l_3, a, b, c$ & $-1$ \\
			$l_2, l_3 / l_1, l_4, a, b, c$ & $-1$ \\
			$l_2, l_4 / l_1, l_3, a, b, c$ & $-2$ \\
			$l_3, l_4 / l_1, l_2, a, b, c$ & $-1$ \\
			$l_1 / l_2, l_3, l_4, a, b, c$ & $-1$ \\
			$l_2 / l_1, l_3, l_4, a, b, c$ & $-1$ \\
			$l_3 / l_1, l_2, l_4, a, b, c$ & $-1$ \\
			$l_4 / l_1, l_2, l_3, a, b, c$ & $-1$ \\
			\hline
		\end{tabular}
	\end{center}
	This completes the proof of Lemma \ref{fortauthereexistsptau}.
    \end{proof}

    The above lemma implies, via compactness, that there exists a type $p(x)$ containing the set $\{f(x, a, b) : (a, b) \in \tau\} \cup \{\neg f(x, a, b) : (a, b) \in M^2 \setminus \tau\}$, and thus $p(x)$ encodes $\tau$. This completes the proof of the encoding lemma (Lemma \ref{can encode ors}), and therefore the proof of Theorem \ref{FPTMF}.

\bibliographystyle{abbrv}
\bibliography{super}

\end{document}